\title{Garside theory and subsurfaces:\\  some examples in braid groups}
\author{Saul Schleimer}
\address{Saul Schleimer, Mathematics Institute, University of Warwick, Coventry CV4 7AL, UK}
\email{s.schleimer@warwick.ac.uk}
\author{Bert Wiest}
\address{Bert Wiest, Univ Rennes, CNRS, IRMAR - UMR 6625, F-35000 Rennes}
\email{bertold.wiest@univ-rennes1.fr}
\newtheorem{theorem}{Theorem}[section]
\newtheorem{lemma}[theorem]{Lemma} 
\newtheorem{proposition}[theorem]{Proposition}
\newtheorem{conjecture}[theorem]{Conjecture}
\newtheorem{observation}[theorem]{Observation}
\newtheorem*{unnumberedtheorem}{Theorem}
\theoremstyle{definition}
\newtheorem{definition}[theorem]{Definition}    
\newtheorem{remark}[theorem]{Remark}
\newtheorem{example}[theorem]{Example}
\newtheorem{question}[theorem]{Question}
\newtheorem{claim}[theorem]{Claim}
\def\B4dual{B_4^{\rm \,dual}}
\renewcommand{\phi}{\varphi}
\def\Bthin{1.4pt} 
\def\Bthick{4pt} 
\def\Bdiag{1.5pt} 
\def\bx{.35} 
\def\by{.5}
\begin{document}

\begin{abstract}
Garside-theoretical solutions to the conjugacy problem in braid groups depend on the determination of a characteristic subset of the conjugacy class of any given braid, e.g.\ the sliding circuit set. It is conjectured that, among rigid braids with a fixed number of strands, the size of this set is bounded by a polynomial in the length of the braids.  In this paper we suggest a more precise bound: for rigid braids with $N$ strands and of Garside length~$L$, the sliding circuit set should have at most $C\cdot L^{N-2}$ elements, for some constant~$C$. We construct a family of braids which realise this potential worst case.
Our example braids suggest that having a large sliding circuit set is a geometric property of braids, as our examples have multiple subsurfaces with large subsurface projection; thus they are ``almost reducible'' in multiple ways, and act on the curve graph with small translation distance.
\end{abstract}

\maketitle


\section{Introduction}

In this paper we will study the \emph{conjugacy search problem} in the $N$-strand braid group~$B_N$ (with $N\geqslant 3$): we are looking at algorithms which take as their input two words (whose letters are elements of some finite generating set of~$B_N$ and their inverses), and whose output is the information whether these words represent conjugate elements in~$B_N$. Moreover, if they do, then the algorithm should find an explicit conjugating element. 

It is currently an open problem to find a polynomial-time algorithm, i.e.\ to find such an algorithm for any number of strands~$N$ such that the running time can be bounded by some polynomial $P_N(L)$ as a function of the length~$L$ of the longer one of the two input words.

Even more ambitious is the quest for a \emph{uniform} polynomial-time solution to the conjugacy problem, which means the following:
we start with some generating set of~$B_{\infty}$ whose intersection with~$B_N$, for any integer~$N$, is a finite generating set of~$B_N$. Now we are looking for 
a polynomial~$P$ and for an algorithm which takes as its input two words in this generating set belonging to~$B_N$, for any~$N$, and decides whether these words represent conjugate elements of~$B_N$. The computation time should be bounded by $P(I)$, where $I$ denotes the bit-size of the input. (Note that $I$ is more than just the sum of the lengths of the input words, since the description of the generators adds to their bit-size.)

There are two families of approaches to these problems. 
\begin{itemize} \item 
Geometric approaches, using the curve complex, subsurface projections and the notion of hierarchically hyperbolic spaces, train track splitting sequences or flip sequences of triangulations, the space of projective measured foliations $\mathcal{PMF}$ etc. These approaches typically talk not only about braid groups, but more generally about mapping class groups of surfaces.
This family of techniques has lead to several spectacular successes in elucidating the structure of mapping class groups. Particularly interesting for the purposes of this paper, Mosher has constructed an automatic (though not bi-automatic) structure on all mapping class groups~\cite{MosherAutomatic}, and he has given a solution to the conjugacy problem in mapping class groups~\cite{MosherConjugacy} (in the pseudo-Anosov case). Masur and Minsky~\cite{MasurMinsky2} together with Tao~\cite{Tao} have proved a linear bound on the conjugator length in mapping class groups, which implies an exponential time solution to the conjugacy problem (see also \cite{ATW}). More recently, Bell and Webb~\cite{BellWebbPolyn} have constructed (and implemented~\cite{curver}) an algorithm for solving in polynomial time the closely related problem of determining the Nielsen--Thurston type and (in the reducible case) the canonical reduction system of any given mapping class. 
Also, Margalit, Strenner and Yurtta\c{s} have announced a quadratic time algorithm, using very different techniques, for determining the Nielsen--Thurston type, the canonical reduction system, and (in the pseudo-Anosov case) the stable and unstable foliations and their stretch factors. 
Most spectacularly, Bell and Webb have announced a polynomial time solution to the conjugacy problem in mapping class groups (where the constants of the polynomial depend exponentially on the complexity of the surface) -- again, details have not yet appeared.
\item 
Garside-theoretic approaches, which typically talk not only about the braid groups but more generally about Garside groups~\cite{DehornoyGroupesDeGarside}, and in particular about Artin groups of spherical type~\cite{CharneyBiautomatic}. This approach has yielded a bi-automatic structure (which we still don't know to exist on other mapping class groups), and it gives rise to simple algorithms for the conjugacy problem which work very fast in practice. Also, in any braid group~$B_N$, there is a cubic-time algorithm for determining the Nielsen-Thurston type of any given element~\cite{CalvezPolynTime}. 
\end{itemize}

Making these two approaches work together, creating a synergy between them, appears to be a difficult task. Here is a rather embarrassing illustration of this difficulty: let us consider the set of all Garside normal form words in~$B_N$, and let us also look at the curve complex of the $N$-times punctured disk, equipped with a base point. The traces of the base point under the action of the Garside words form a family of paths in the curve complex. It is currently an open problem whether these paths are a uniform family of reparametrised quasi-geodesics! 

In the present paper we propose another interaction, namely between the Garside-theoretic solution to the conjugacy problem and subsurface projections. 
Specifically, we study examples of braids where the Garside-theoretic conjugacy algorithm works relatively slowly, and we try to explain this lack of efficiency in terms of multiple subsurfaces having very large projections. Our analysis suggests that even these ``bad'' cases, which we conjecture to be the worst possible ones, are still quite satisfying, in that they would guarantee a polynomial bound on the computational complexity of the conjugacy problem.

In order to explain the results of this paper in more detail, we recall very briefly some ideas of Garside theory -- for more details, see Section~\ref{S:GarsideAndOuroboroi}.
All the classical Garside-theoretic solutions to the conjugacy problem in braid groups are based on the same principle: to every braid~$x$ one associates a certain finite subset of the conjugacy class of~$x$,  and this subset is characteristic in the sense that if $x$ and $x'$ are conjugate, then the subsets in question coincide. Now in order to decide whether two given braids are conjugate, it suffices to determine algorithmically the full characteristic subset of one, and at least one element of the characteristic subset of the other, and check whether the latter belongs to the former.

Over time, various characteristic subsets have been proposed (the summit set, the super summit set, the ultra summit set...) \cite[Section 3.2]{Gebhardt-GM}, but we will use the \emph{sliding circuit set}~$SC(x)$ defined in~\cite{Gebhardt-GM}. 
We will not give the definition for general braids~$x$, but only in the case where~$x$ is \emph{rigid}. Roughly speaking, this means that the Garside normal form of~$x^2$ is equal to two copies of the Garside normal form word of~$x$, concatenated -- in other words, the last letter of the normal form, followed by the first letter, is again in normal form. It is a theorem of Gebhardt and Gonz\'alez-Meneses~\cite{Gebhardt-GM} that if $x$ is conjugate to a rigid braid, then $SC(x)$ consists exactly of all rigid conjugates of~$x$.

The main difficulty proving a polynomial bound on the computational complexity of the conjugacy problem (using the classical Garside-theoretic strategy, and in the pseudo-Anosov case) is establishing a polynomial bound on the size of the sliding circuit set of rigid braids, as a function of braid length, for a fixed number of strands. (In fact, for reducible, non-rigid braids it is known that the size of $SC$ \emph{can} grow exponentially with the braid length~\cite{GonzMenRed}.) 

In this paper we present some examples of braids with remarkably large sliding circuit sets:

\begin{theorem}\label{T:Main}
There exists a family of positive braids $\gamma(N,L)$, with $N$ strands and of Garside-length~$L$ (with $N$~even, and $N\geqslant 4$, and $L$~odd, $L\geqslant N-1$) such that  $$|SC(\gamma(N,L))| = 2\cdot (L-1)\cdot L^{N-3}$$
\end{theorem}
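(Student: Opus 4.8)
The plan is to prove \autoref{T:Main} in three stages: construct the family explicitly, verify rigidity so that the sliding circuit set becomes accessible, and then count the rigid conjugates by a combinatorial analysis guided by the subsurface picture described in the abstract. First, I would write down $\gamma(N,L)$ as an explicit positive braid word and exhibit its left normal form $\gamma = s_1 s_2 \cdots s_L$ as a product of $L$ permutation braids. The construction should be engineered so that each factor $s_i$ is transparent and so that the word is manifestly left-weighted, i.e.\ the starting set of $s_{i+1}$ is contained in the finishing set of $s_i$ for every $i$. The crucial extra requirement is that the cyclic pair $s_L s_1$ is left-weighted as well, which is exactly the assertion that $\gamma$ is \emph{rigid}. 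For a well-chosen family (this is presumably where the parity and size constraints $N$ even, $L$ odd, $L \geqslant N-1$ enter) this is a finite, local check independent of $L$. Once rigidity is established, the theorem of Gebhardt and Gonz\'alez-Meneses \cite{Gebhardt-GM} quoted above guarantees that $SC(\gamma(N,L))$ is precisely the set of rigid braids conjugate to $\gamma(N,L)$, reducing the problem to \emph{enumerating these rigid conjugates}.

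For the enumeration I would exploit the geometry advertised in the abstract: $\gamma(N,L)$ should admit a nested chain of $N-2$ essential curves, cutting the $N$-punctured disc into subsurfaces each of which is \emph{almost} invariant and inside each of which $\gamma$ performs a long twist. The idea is that conjugation can be localised: for each subsurface one may conjugate by an element supported there, shifting the ``phase'' of the corresponding twist, and the outcome remains rigid as long as the phase stays within an admissible window. Encoding these choices as a tuple of integer parameters — one per subsurface in the chain, ranging over roughly $L$ values, with one distinguished subsurface contributing only $L-1$ values, together with a binary choice producing the factor $2$ — gives an explicit injection from a parameter set of size $2\cdot(L-1)\cdot L^{N-3}$ into the rigid conjugates. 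The \emph{lower bound} then follows once distinct tuples are shown to yield distinct braids, which I expect to do by reading the parameters back off invariants of the normal form, for instance the positions at which the various twist regions begin.

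The \emph{upper bound} is where the real work lies, and I expect it to be the main obstacle: one must show that \emph{every} rigid conjugate arises from the parametrisation, i.e.\ that the list is exhaustive. Concretely I would analyse the sliding-circuit graph on the set of rigid conjugates. Cyclic sliding acts by cyclic permutation of the factors $s_i$ and moves within a single circuit, whereas passage between circuits is effected by conjugation by minimal simple elements. The goal is to prove that each admissible move either alters one of the parameters by a controlled amount or leaves the conjugacy data unchanged, and that no rigidity-preserving move escapes the parameter window. This demands a careful local classification of which simple elements can conjugate a rigid braid of the given shape to another rigid braid, followed by a global induction on the chain of subsurfaces — peeling off the outermost twist region first — to exclude any rigid conjugate outside the enumerated family. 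Carrying out this exhaustiveness argument simultaneously with the distinctness and rigidity verifications is, I believe, the crux of the proof.
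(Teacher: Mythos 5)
You should first be aware that the paper does not actually prove this theorem. The authors construct $\gamma(N,L)$ explicitly, compute $|SC(\gamma(N,L))|$ with the software \cite{JuanProgram} for a table of pairs $(N,L)$, and extrapolate the formula $2(L-1)\cdot L^{N-3}$; they state explicitly that the formal proof is ``tedious, and is omitted''. The only rigorously established statement is the lower bound of Lemma~\ref{L:SClowerbound}, whose proof matches the lower-bound half of your plan in spirit: one conjugates by full twists supported inside the $N-2$ nested ouroboroi, thereby redistributing the $K$ repeated body-blocks among the $N-2$ twist regions while keeping the braid rigid. Your overall framework (explicit normal form, rigidity check, reduction via Gebhardt--Gonz\'alez-Meneses to counting rigid conjugates) is the right one and is exactly the framework the paper works in.

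The genuine gap is in your claimed parametrisation, and it would make even the lower bound fail as stated. With $L=N-1+2K$, the slithering conjugates are the braids $\gamma(N,k_1,\ldots,k_{N-2})$ with $k_1+\cdots+k_{N-2}=K$: the ``phases'' of the nested twist regions are \emph{not} independent, because they share a fixed total amount of twisting, and moreover the nesting forces the head-tails to occur in a prescribed cyclic order. Consequently this family yields only $\binom{K+N-3}{N-3}\sim L^{N-3}/\bigl(2^{N-3}(N-3)!\bigr)$ cycling-orbits, an asymptotically vanishing fraction of the $L^{N-3}$ orbits asserted by the theorem. So there is no injection of the naive form you describe, with $N-3$ parameters each ranging freely over $\sim L$ values; the bulk of $SC(\gamma(N,L))$ consists of rigid conjugates \emph{not} of the form $\gamma(N,k_1,\ldots,k_{N-2})$, and neither your proposal nor the paper identifies them. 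Together with the exhaustiveness (upper-bound) step, which you correctly flag as the crux but do not supply, this is where a real proof would have to begin.
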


For $N=5$ we have examples of braids whose sliding circuit sets are even slightly larger, namely of size $2L^3=2L^{N-2}$. Also, we have some sporadic exemples of very short braids (with $L\leqslant 3$) whose sliding circuit set has strictly more than $2L^{N-2}$ elements. However, we propose

\begin{conjecture}[Polynomial bounds on the sliding circuit set]\label{C:BoundOnSC}
There exists a constant~$C$ such that for any rigid braid~$x$ with $N$ strands and Garside-length~$L$, 
$$
|SC(x)| \leqslant C\cdot L^{N-2}
$$
We even conjecture that the value $C=2$ is valid for sufficiently large~$L$. (Note that we are not supposing $x$ to be pseudo-Anosov.)
\end{conjecture}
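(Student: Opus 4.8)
The plan is to prove the bound via the coarse geometry of the curve complex, turning the heuristic of the abstract---that a large $SC(x)$ reflects large subsurface projections in many disjoint subsurfaces---into a genuine counting argument. By the theorem of Gebhardt and Gonz\'alez-Meneses quoted above, $SC(x)$ is exactly the set of rigid conjugates of $x$, so it suffices to bound their number. First I would fix a base marking $\mu$ on the $N$-punctured disk $D_N$ and, to each rigid conjugate $y = g^{-1} x g$, associate the displaced marking $g^{-1}\mu$ (well defined up to the action of $\langle x\rangle$, which merely cycles the Garside normal form). The goal is then to show that these markings all lie in a single coarse region $\mathcal{R}$ of the marking complex, and that the number of lattice points of $\mathcal{R}$ is at most $C\cdot L^{N-2}$.

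The key structural input is the Masur--Minsky distance formula together with the hierarchically hyperbolic structure on $\MCG(D_N)$: these express the marking complex coarsely as a product over a pants decomposition $P$, with one factor for each annular subsurface about a curve of $P$ (recording Dehn twisting) and the remaining factors recording projections to the larger subsurfaces filled by $P$. Because all the markings $g^{-1}\mu$ track the single bi-infinite invariant axis of the pseudo-Anosov $x$, the region $\mathcal{R}$ they fill is coarsely a box, each of whose factors is a genuine geodesic segment (rather than an exponentially branching tree), so that the number of lattice points of $\mathcal{R}$ is coarsely the product $\prod_W \mathrm{diam}\,\pi_W(\mathcal{R})$ over a maximal disjoint family of subsurfaces $W$. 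A pants decomposition of the $(N+1)$-holed sphere underlying $D_N$ has exactly $N-2$ curves, so the number of factors is the complexity $\xi(D_N)=N-2$; this is the source of the exponent, and heuristically it matches the factorisation $2\cdot(L-1)\cdot L^{N-3}$ of \autoref{T:Main}, with one longitudinal factor coming from cycling along the axis and $N-3$ transverse factors coming from twisting in disjoint subsurfaces.

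It then remains to bound each factor by $O(L)$. Since all rigid conjugates share the invariant lamination pair of $x$, their projections to any fixed $W$ lie within bounded distance of a single geodesic segment in $\mathcal{C}(W)$, whose length is controlled, via the coarse-Lipschitz property of $\pi_W$, by the $\MCG(D_N)$-displacement of the base marking under one period of $x$; because $x$ has Garside length $L$ this displacement is at most $C'\cdot L$, so $\mathrm{diam}\,\pi_W(\mathcal{R}) \leqslant C'\cdot L$, while the bounded geodesic image theorem guarantees that subsurfaces not met by the axis contribute nothing. Taking the product over the $N-2$ factors yields $|SC(x)| \leqslant C\cdot L^{N-2}$. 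The reducible case (where $x$ need not be pseudo-Anosov) can be attacked in parallel by induction on complexity: a rigid reducible braid preserves its canonical reduction system and restricts to rigid braids on the complementary pieces, whose complexities sum to at most $N-2$, so $SC(x)$ should factor as a product over these pieces.

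The hard part will be controlling the errors. Each of the distance formula, the Behrstock inequality, and the coarse-Lipschitz estimate for $\pi_W$ carries additive and multiplicative constants; multiplied across all $N-2$ subsurfaces these can degrade a polynomial bound into a quasi-polynomial one, and in particular the refined claim $C=2$ demands sharp rather than coarse control of every factor. More seriously, there is at present no precise dictionary translating a single cyclic sliding into a bounded, well-understood move of the associated marking and its subsurface projections; without such a dictionary one cannot rule out that the sliding-circuit graph revisits the same coarse region many times, nor guarantee that the product estimate is an honest upper bound rather than merely a lower bound. Establishing this dictionary---effectively showing that $SC(x)$ embeds quasi-isometrically into the product of subsurface projections along the axis---is exactly the missing ingredient, and is the reason the statement is offered here as a conjecture rather than a theorem.
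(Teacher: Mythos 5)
There is nothing in the paper to compare your argument against: the statement you are proving is Conjecture~\ref{C:BoundOnSC}, restated as Conjecture~\ref{C:deltaWorst}, and the paper offers no proof of it in any generality. Its evidence is experimental (computer searches, plus hand-constructed \emph{lower} bounds such as Lemma~\ref{L:SClowerbound}), and the only proved case is $N=4$ (Lee's thesis for the super summit set; Calvez--Wiest for the dual structure). So the question is whether your sketch closes the gap, and it does not: the decisive step --- that the markings $g^{-1}\mu$ of all rigid conjugates fill a region that is coarsely a \emph{single box} over one fixed pants decomposition, with $N-2$ geodesic (non-branching) factors each of length $O(L)$ --- is not a consequence of the distance formula or of hierarchical hyperbolicity. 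It is essentially a restatement of the paper's own Commutativity and Cubing Conjectures (Conjecture~\ref{C:BigSCisGeometric} and the Cubing Conjecture of Section~\ref{S:Conjectures}), so the argument is circular exactly where it needs content. Two further counting issues are unaddressed even granting the box: (i) projection diameters bound \emph{distances}, not \emph{cardinalities}, so you need uniform coarse separation of the markings of distinct rigid conjugates (bounded fibers of $y\mapsto g^{-1}\mu$), and (ii) the marking is well defined only up to the \emph{centralizer} of $x$, not up to $\langle x\rangle$; for pseudo-Anosov $x$ these differ by bounded index, but not in the general case the conjecture explicitly includes. Moreover, the family of subsurfaces with large projection along one period of the axis need not be simultaneously disjoint: they can occur time-ordered and overlapping, in which case the region is a priori a tree-like union of many boxes rather than one, and the product estimate fails as an upper bound. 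Section~\ref{SS:NoOuroboroi} of the paper shows this worry is real: there are rigid pseudo-Anosov braids with large $SC$ and \emph{no} ouroboroi, i.e.\ no visible box structure at all (and with $|SC|>2L^{N-2}$ for small~$L$, e.g.\ $234>162$ at $N=6$, $L=3$), which is why the paper restricts even its conjectural $C=2$ to large~$L$.

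The reducible case is also broken as sketched. It is not true in general that a rigid reducible braid restricts to \emph{rigid} braids on the pieces of its canonical reduction system (rigidity is a property of the Garside normal form of the ambient braid, and normal forms do not simply restrict), nor that $SC(x)$ factors as a product over the pieces --- the interaction between the tubes and the interior braids is precisely where Gonz\'alez-Meneses's examples of exponentially large sets for reducible braids live, so any induction on complexity must do real work to rule this out for rigid braids; and periodic braids (e.g.\ powers of $\Delta$) have enormous centralizers, so the marking map degenerates completely there. In short: your proposal is a reasonable research program, and you honestly flag the missing ``dictionary'' at the end, but the flagged gap is not a technical refinement --- it \emph{is} the conjecture, and the paper leaves it open too.
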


More interesting than the precise size of the sliding circuit set in Theorem~\ref{T:Main} are the \emph{geometric} properties of the example braids~$\gamma(N,L)$. 
In all the examples which we found (in part through computer searches), of long braids with very unusually large sliding circuit sets, 
the size of~$SC$ is readily explained by the presence of multiple \emph{ouroboroi}. We will  give a rigorous definition of an ouroboros later, but roughly speaking, an ouroboros is a subsurface of the $n$-times punctured disk which is almost invariant under the action of the braid. Thus the ``bad'' braids in question are very close to being reducible -- for instance, they act on the curve complex of~$D_N$ with very small translation distance. Moreover, in our examples the largest sliding circuit sets occur when different ouroboroi move relative to each other -- we say they ``slither''. This situation is strongly reminiscent of disjoint subsurfaces with large projections.

We conjecture that the presence of ouroboroi is essentially the \emph{only} reason sliding circuit sets can become big. The following very vague conjecture will be made more precise later (Conjecture~\ref{C:BigSCisGeometric}).

\begin{conjecture}[Commutativity conjecture]\label{C:CommutativityConjecture}
At least for sufficiently long braids, large sliding circuit sets come from some kind of internal commutativity of the braid, and this internal commutativity is a geometric fact: if a braid has a big sliding circuit set, then it has several slithering ouroboroi.
\end{conjecture}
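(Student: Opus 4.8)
The plan is to first split this (admittedly vague) conjecture into a precise implication together with its converse, and to observe that one direction is already essentially in hand. Writing ``$x$ has several slithering ouroboroi'' for the geometric side (to be quantified using the definitions of Section~\ref{S:GarsideAndOuroboroi}) and ``$|SC(x)|$ is large'' for the combinatorial side (say, $|SC(x)|$ growing faster than linearly in the Garside-length $L$, or exceeding a fixed multiple of $L$), the conjecture asserts that, for $L$ large, a large sliding circuit set forces several slithering ouroboroi. The converse implication --- that slithering ouroboroi produce a large sliding circuit set --- is exactly the phenomenon realised by the family $\gamma(N,L)$ of Theorem~\ref{T:Main}. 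I would therefore take the explicit count behind that theorem as the model for the ``sufficiency'' direction and concentrate all effort on the harder ``necessity'' direction.

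For necessity I would work directly with the structure of $SC(x)$. By the theorem of Gebhardt and Gonz\'alez-Meneses \cite{Gebhardt-GM}, when $x$ is conjugate to a rigid braid the set $SC(x)$ is exactly the set of rigid conjugates of $x$, organised into the cyclic orbits of the cyclic sliding map. The first step is combinatorial: to show that a large number of distinct rigid conjugates forces the normal form of $x$ (after conjugation) to decompose into sub-words supported on a collection of disjoint or nested subsurfaces, with cyclic sliding acting by independently advancing a ``reading head'' within each piece. If this product structure can be established, then $|SC(x)|$ factorises as a product over the independent subsurfaces of the number of available positions, which is precisely the mechanism producing the polynomial $2(L-1)L^{N-3}$ in Theorem~\ref{T:Main}. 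The boundaries of these subsurfaces are the candidate ouroboroi, and their relative advancement under sliding is the ``slithering''.

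The geometric half of the statement would then follow by passing to the action on the curve graph $\mathcal{C}(D_N)$. A large sliding circuit set makes $x$ ``almost reducible'' and in particular forces a very small translation distance on $\mathcal{C}(D_N)$, as noted in the introduction. For a braid of large Garside-length this smallness is only possible if some essential subsurface $W$ carries a large subsurface projection $d_W$; the bounded geodesic image theorem of Masur and Minsky \cite{MasurMinsky2} then confines the action into $W$ and identifies $\partial W$ as an almost-invariant curve, that is, an ouroboros. To pass from one ouroboros to \emph{several}, I would peel off an extremal subsurface and recurse, using the Behrstock inequality to keep the resulting subsurfaces disjoint or nested and to guarantee that their projections are simultaneously large. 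The relative motions recovered geometrically should match the product factorisation found combinatorially, unifying the two sides and yielding the precise form stated later as Conjecture~\ref{C:BigSCisGeometric}.

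The main obstacle is exactly the dictionary problem flagged in the introduction: there is at present no quantitative, two-way correspondence between the Garside-combinatorial quantities (number of cyclic slidings, size of $SC$) and curve-graph geometry (translation distance, subsurface projections), since it is not even known whether the traces of Garside normal form words are uniform reparametrised quasi-geodesics in $\mathcal{C}(D_N)$. Without at least a coarse one-directional version of this correspondence one cannot reliably turn ``$|SC(x)|$ large'' into ``some $d_W$ large'', nor recover the product count from the geometry. I expect the production of even a single ouroboros from a large sliding circuit set to be the crux; upgrading ``one'' to ``several'', and matching the exact exponent $N-2$ of Conjecture~\ref{C:BoundOnSC}, will require in addition a careful product-region analysis controlling how the ouroboroi interact as they slither.
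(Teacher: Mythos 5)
You have written a programme for proving a statement that the paper itself does not prove: \autoref{C:CommutativityConjecture} is a conjecture, explicitly described as ``very vague'', supported only by computer experiments on the families $\beta(N,L)$, $\gamma(N,L)$, $\delta(5,L)$, and later sharpened (but still only conjecturally) into \autoref{C:BigSCisGeometric}. There is therefore no proof in the paper to compare yours against, and your text is likewise not a proof but a plan; every link in it is left unestablished. That would be acceptable as a summary of the authors' own heuristic, except that the ``necessity'' direction, as you formulate it (large $|SC(x)|$ for long braids forces several slithering ouroboroi), is contradicted by examples the paper itself supplies in Section~\ref{SS:NoOuroboroi}: for any rigid $x$ and $p\geqslant 2$ the power $x^p$ satisfies $|SC(x^p)|\geqslant|SC(x)|$ yet has \emph{no} ouroboroi (one gets $p$ snakes biting each other's tails, not one biting its own), and $x^p$ can be made as long as one likes; moreover the braids with $(N,L)=(6,3)$ and $(7,2)$ have $|SC|>2L^{N-2}$ with no ouroboroi at all. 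This is exactly why the authors restrict the precise version to braids that \emph{maximise} $|SC|$ in their length class, a restriction your formulation drops.

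The geometric chain you propose --- large $|SC(x)|$ $\Rightarrow$ small translation distance on $\mathcal{C}(D_N)$ $\Rightarrow$ some large subsurface projection $d_W$ $\Rightarrow$ $\partial W$ is an ouroboros --- has a broken link at each arrow. The paper proves only the reverse of the first implication (an ouroboros forces translation distance at most $3$); nothing in the paper or in the literature cited turns a large sliding circuit set into small translation distance. The second arrow is false as stated: a long pseudo-Anosov braid of small dilatation can have small translation distance on the curve graph while all its subsurface projections stay uniformly bounded, so smallness of the translation distance does not by itself produce a subsurface with large projection. And the final identification of $\partial W$ with an ouroboros requires exactly the Garside--geometry dictionary (normal-form paths as uniform reparametrised quasi-geodesics, a quantitative version of \cite{BNG} for almost-round curves) that the introduction flags as open. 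So the crux you correctly identify at the end --- producing even one ouroboros from a large sliding circuit set --- is not merely the hard step; with your hypotheses it is not achievable, and any honest formulation must first build in the maximality (or some other non-degeneracy) restriction of \autoref{C:BigSCisGeometric}.
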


If some reasonable interpretation of the Commutativity Conjecture was proven, then we would probably obtain a polynomial bound on the size of the sliding circuit set, and hence on the complexity of the conjugacy problem in the braid group~$B_N$, for any fixed~$N$, at least in the pseudo-Anosov case.

The rest of the paper is organised as follows. In Section~\ref{S:GarsideAndOuroboroi} we briefly review some of the background in Garside theory; we also define ouroboroi, our main geometric tool. Section~\ref{S:Examples} contains our main results, namely the examples of braids with unusually large sliding circuit sets. The main tool for constructing such examples are ouroboroi. Finally, in Section~\ref{S:Conjectures}, which is more speculative, we present some ideas on the structure of sliding circuit sets. This will put into context our Commutativity Conjecture. 
Also, the conjectured structure could be helpful in attempts to find a \emph{uniform} polynomial time solution to the conjugacy problem.


\section{Garside theory and ouroboroi}\label{S:GarsideAndOuroboroi}

In this section we recall some important known results about the Garside-theoretic approach to the conjugacy problem in braid groups. Note that in this paper we will be interested in the case where the braids are assumed to be pseudo-Anosov, so this also will be the focus in this section. We will also give the definition of an \emph{ouroboros}, the new geometric tool which we will be using throughout the paper.

We recall that that every element of the braid group $B_N$ has a unique normal form $x=\Delta^k.x_1.x_2.\ldots. x_\ell$, where
\begin{enumerate}
\item $\Delta$ is the half-twist braid (whose square generates the center of~$B_N$).
\item Every $x_i$ is a \emph{simple} braid (also known as positive permutation braid or Garside braid), i.e. a positive braid such that any two stands cross at most once.
\item Every pair $x_i.x_{i+1}$ is left-weighted, meaning that $x_i$ contains as many crossings as possible, and $x_{i+1}$ as few crossings as possible, among all writings of the braid $x_i x_{i+1}$ as a product of two simple braids.
\end{enumerate}

After multiplying $x$ by an element of the center $\langle \Delta^2\rangle$, we can assume that $k=0$ or $k=1$, and in particular that $x$ is positive. For the rest of the paper we will only talk about positive braids.
The \emph{supremum} of such a braid is $\sup(x)=k+\ell$. We will often denote it~$L$, because in our context it is just the length of the braid word; and by the ``length'' of a braid we will always mean this Garside-length.

Next we recall the definition of a \emph{rigid} braid.
\begin{itemize}
\item If $k$ is even, we say $x$ is rigid if the writing $x_\ell.x_1$ is left-weighted, as well (i.e., if the \emph{cyclic} word $x_1.x_2.\ldots.x_\ell.$ is left-weighted everywhere) .
\item If $k$ is odd, we say $x$ is rigid if the writing $x_\ell.\Delta^{-1}x_1\Delta$ is left-weighted. (Note that $\Delta^{-1}x_1\Delta$ is again a simple braid)
\end{itemize}

Among the set of all positive braids on $N$ strands, there is a large class of braids~$y$, including all pseudo-Anosov braids, with the following property: they have a power~$y^k$ (with $k\leqslant \left(\frac{N(N-1)}{2}\right)^3$) which is conjugate to a rigid braid \cite{BirGebGM1}.
Moreover, there is an algorithm which, for any given~$y$, finds the appropriate power~$k$ and a rigid conjugate of $y^k$, if it exists. This algorithm works in polynomial time in the length of~$y$ (for fixed~$N$)~\cite{CalvezPolynTime}.

For a positive braid~$x$ which has a rigid conjugate (e.g.\ for $x=y^k$), the \emph{sliding circuit set} $SC(x)$ is the set of all rigid conjugates of~$y$. (This is actually a theorem of Gebhardt and Gonz\'alez-Meneses~\cite{Gebhardt-GM}, but for the purposes of the present paper, we can use this as the definition of the sliding circuit set.) The sliding circuit set is always finite, and in fact it is a subset of the well-known \emph{super summit set} of \cite{ElrifaiMorton}.

If we want to solve the conjugacy problem, then we need a computable, complete invariant of conjugacy classes. If $x$ is a positive braid with a rigid conjugate, then $SC(x)$ is such an invariant. 
Unfortunately, calculating the full sliding circuit set (not just one of its elements) may a priori be difficult.
For solving the conjugacy problem in braid groups in polynomial time (at least in the pseudo-Anosov case), it would be sufficient to place a polynomial bound on the number of elements in the sliding circuit set, as a function of the length of the input braid:

\begin{question}\label{Q:SCpolynBound?} 
Is it true that for every integer $N$ (with $N\geqslant 5$) there exists a polynomial~$P_N$ such that every positive rigid braid $x\in B_N$ with $\sup(x)=L$ satisfies
$$ |SC(x)| \leqslant P_N(L) \ ?$$
\end{question}

In the next section, we will present some examples of braids where the sliding circuit set is relatively large -- still of polynomially bounded size, but remarkably large nevertheless. We will show that in each case, the braid has a very particular structural feature, which we call an \emph{ouroboros}. In order to explain this choice of words we recall that an ``ouroboros'' usually means a dragon or a snake biting its own tail -- these creatures appeared in mythologies of several cultures.

Before defining an ouroboros, we recall a result of Bernardete, Nitecki and Guti\'errez~\cite{BNG}. We denote $D^2$ the unit disk in~$\mathbb C$, and $D_N$ the same disk, but with $N$ punctures lined up on the real line. We say a simple closed curve in~$D_N$ (or its isotopy class) is \emph{round} if it is isotopic to a circle in~$D_N$ that contains at least two, but not all the punctures. If the action of a braid~$x$ sends a round curve~$c$ to a round curve~$c'$, and if the Garside normal form of~$x$ is $\Delta^k x_1.\ldots.x_\ell$, then, according to~\cite{BNG}, every prefix $\Delta^k x_1.\ldots.x_{\tilde \ell}$ (with $\tilde \ell < \ell$) also sends $c$ to some round curve.

In the following definition we take $x$ to be a braid with normal form $\Delta^k x_1\ldots x_\ell$, where $k\in\{0,1\}$. We denote $L$ the total number of factors: $L=k+\ell=\sup(x)$.

We take $x$ to be realised as a braid in the solid cylinder $D^2\times [0,L]$, with the factor $x_i$ living in $D^2\times [i,i+1]$. The closure $\hat x$ is realised in the solid torus $(D^2\times [0,L])/\sim$, where $\sim$ is the equivalence relation $(x,0)\sim (x,L)$ for all $x\in D^2$. 

We are going to define two types of ouroboroi, \emph{round} and \emph{eccentric} ones. Before we can do so, we have to define their \emph{base curves}.

\begin{definition}
The \emph{base curve of a round ouroboros} with $m$ strands and \emph{head-tail} in the $i$th factor is a round curve~$c$ containing $m$~punctures in its interior, and which is sent to a round curve by the action of $x_{i+1} x_{i+2} \ldots x_\ell \Delta^k x_1 x_2 \ldots x_{i-1}$.
Moreover, we require that the intersection of the disks bounded by $c$ and by $c.x_{i+1} \ldots x_\ell \Delta^k x_1 \ldots x_i$ consists of a single disk which contains at most~$m-2$ punctures. 

The \emph{base curve of an eccentric ouroboros} with $m$ strands and \emph{head-tail} in  factors $x_i$ and $x_{i+1}$ is a round curve~$c$ containing $m$ punctures in its interior, such that the curve $c.x_{i+2}\ldots x_\ell \Delta^k x_1\ldots x_{i-1}$ is again round. Moreover, we require that the disks bounded by the curves $c.x_{i+1}^{-1}$ and $c.x_{i+2}\ldots x_\ell \Delta^k x_1\ldots x_i$ contain the same punctures.
\end{definition}

By~\cite{BNG}, the image of~$c$ after each intermediate factor of the normal form is again a round curve. Thus the curve~$c$ induces a round tube going almost completely around the braid. Only in the one or two head-tail factors does the shape of the tube get slightly more complicated.
This tube is what we will call an ouroboros.
Notice that the tube does not close up into a torus -- if it did, the braid would be reducible. So having an ouroboros is very close to being reducible. Here is the formal definition:

\begin{definition}
A \emph{round ouroboros} of~$x$, with $m$ strands and \emph{head-tail} in the $i$th factor is a cylinder which is properly embedded in $(D^2\times ([i+1,L]\cup [0,i]))/\sim$, disjointly from the braid, and whose intersection with the disk $D^2\times \{i+1\}$ is a base curve~$c$ of a round ouroboros as defined above.
(Its intersection with $D^2\times \{i\}$ is thus the round curve $c.x_{i+1} x_{i+2} \ldots x_\ell \Delta^k x_1 x_2 \ldots x_{i-1}$.)

An \emph{eccentric ouroboros} of~$x$ is defined analogously, as a cylinder properly embedded in $(D^2\times ([i+2,L]\cup [0,i]))/\sim$.
\end{definition}

\begin{figure}[htb]
\centering
\def\svgwidth{12cm}
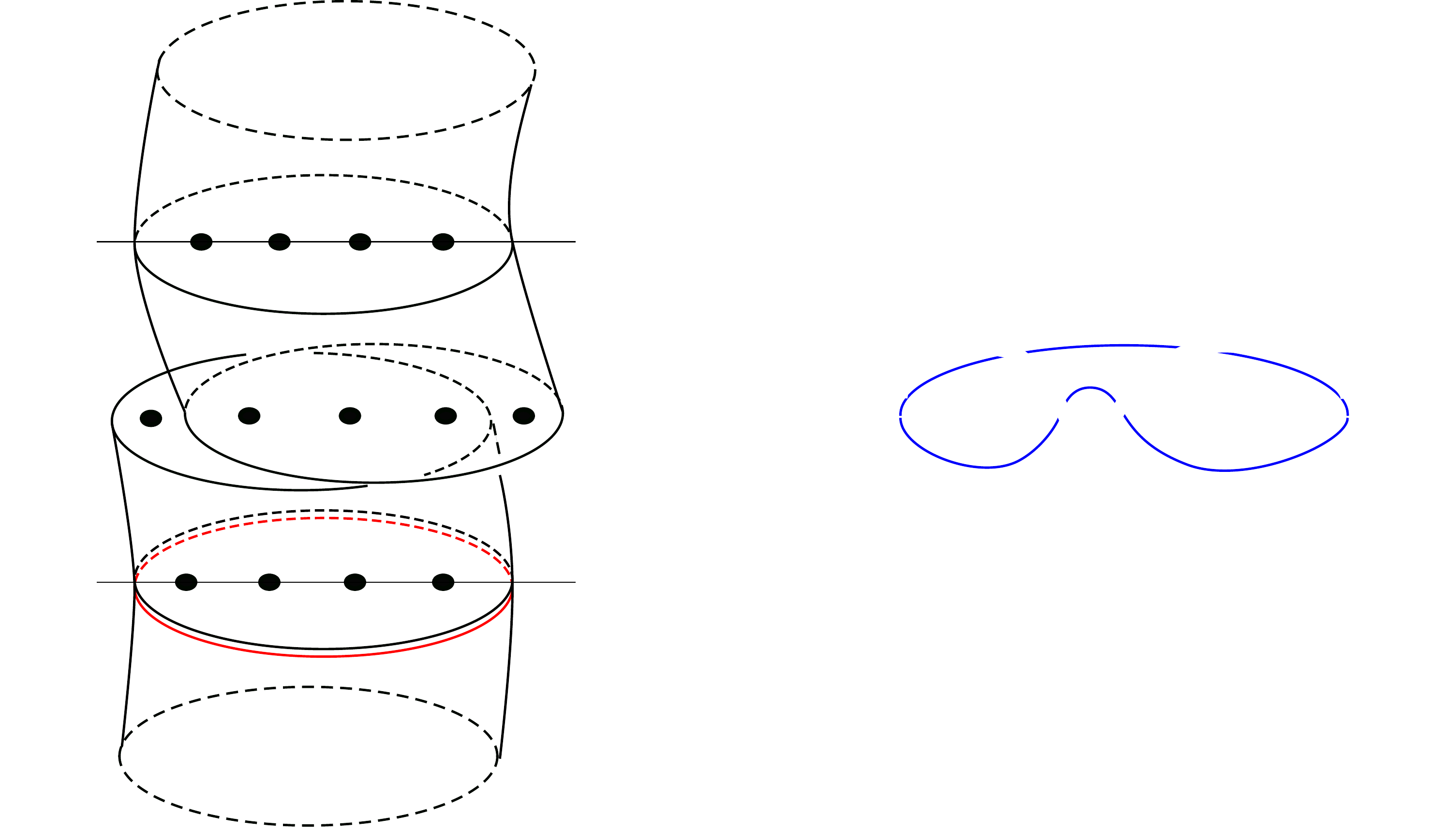
\caption{The head-tail factors of a round (left) and an eccentric (right) ouroboros. The base curves are indicated in red and labelled~$c$.  Also, as a visual aid, on the right hand side the factor~$x_{i+1}$ and the corresponding part of the ouroboros are drawn in blue.}
\end{figure}

\begin{remark}
Let us say a curve is \emph{almost round} if it is not round, but it is the image of a round curve under the action of a simple braid. If a braid~$x$ has a round ouroboros, then the image of the round curve~$c$ under the action of $x_{i+1} \ldots x_\ell \Delta^k x_1 \ldots x_i$ is an almost round curve. If $x$ possesses an eccentric ouroboros, then the image of the almost round curve $c.x_{i+1}^{-1}$ under the action of $x_{i+1}\ldots x_\ell \Delta^k x_1\ldots x_i$ is the almost round curve $c.x_{i+2}\ldots x_\ell \Delta^k x_1\ldots x_{i-1}$.
In both cases, a conjugate of $x$~sends some round or  almost round curve to an almost round curve. Now, any two almost round curves in~$D_n$ are at distance at most~3 in the curve complex (this is an exercise -- see \cite[Section 4.1]{FMPrimer} for an introduction to the curve complex). Thus, for any braid~$x$ with an ouroboros, the action of~$x$ on the curve complex has translation distance at most~$3$. 
\end{remark}


\section{Large sliding circuit sets and ouroboroi: examples}\label{S:Examples}

In this section we present some families of examples of braids with unusually large sliding circuit sets. We show how the presence of multiple ouroboroi allows the braid to have such a large sliding circuit set. On the other hand, we also see that this way of creating large sliding circuit sets can only yield sliding circuit sets whose size grows polynomially with the length of the braid, not exponentially. 

The braids presented in this section are the worst we found during large computer-searches for braids with big sliding circuit sets. This may be interpreted as evidence that Conjectures~\ref{C:BoundOnSC} and \ref{C:CommutativityConjecture} are true, and thus that the conjugacy problem from pseudo-Anosov braids can be solved in polynomial time.

The families of braids studied in Subsections~\ref{SS:SimpleEx} and~\ref{SS:NestedOuroboroi} depend on two parameters $N$ (the number of strands) and $L$ (the Garside length), and the family in Subsection~\ref{SS:EccentricExample} on the single parameter~$L$. Our main tool for finding and studying in detail these braids was the computer program~\cite{JuanProgram}. In each of these families of examples, the size of the sliding circuit set can, in principle, be determined by hand, with a formal proof. However, the number of case checks involved is prohibitive. 
We take an experimental approach: we calculate the size of the sliding circuit sets for a substantial number of pairs $(N,L)$, trusting the program~\cite{JuanProgram} to give the correct values. Also, from the calculated values we extrapolate, and give general formulae (for all possible values of $N$ and~$L$) for the size of the sliding circuit set. While this is, in principle, an unreliable methodology, we believe that most readers will be convinced by our extrapolations in these three particular cases.


\subsection{A simple example introducing ouroboroi}\label{SS:SimpleEx}

For any odd number~$N$ ($N\geqslant 5$) and any integer $L$ ($L\geqslant 2$) consider the braid with $N$~strands
$$\beta(N,L) = (\sigma_1 \sigma_3 \sigma_5 \ldots \sigma_{N-2}.)^{L+1} \sigma_1 \sigma_2 \sigma_3 \ldots \sigma_{N-1}$$ 
This braid of length~$L+2$ is not rigid, but it is conjugate to a rigid braid of length~$L$. Hence $SC(\beta(N,L))$ consists exactly of the rigid conjugates of~$\beta(N,L)$, which are all of length~$L$. Since these rigid braids are not powers of any other elements, we have that
$$S(N,L):=\frac{|SC(\beta(N,L))|}{L}$$
is equal to the number of orbits under cycling (cyclic permutation of the factors) of $SC(\beta(N,L))$. 
Using the program~\cite{JuanProgram} we calculated the value $S(N,L)$ for many pairs $(N,L)$: 
\begin{small}
\begin{center}
\begin{tabular}{c||c|c|c|c|c|c|c|c|c|c|c}
$N \backslash L$ & 2 & 3 & 4 & 5 & 6 & 7 & 8 & 9 & 10 & 11 & 12 \\ \hline\hline
5 & 1 & 2 & 3 & 4 & 5 & 6 & 7 & 8 & 9 & 10 & 11 \\
7 & 2 & 6 & 12 & 20 & 30 & 42 & 56 & 72 & 90 & 110 & 132 \\ 
9 & 4 & 16 & 40 & 80 & 140 & 224 & 336 & 480 & 660 & 880 & 1144 \\
11 & 8 & 40 & 120 & 280 & 560 & 1008 & 1680 & 2640 & 3960 & 5720 & 8008\\
13 & 16 & 96 & 336 & 896 & 2016 & 4032 & 7392 & \small{12672} &  &  & \\
15 & 32 & 224 & 896 & 2688 & 6720 & 14784 &  &  &  &  & \\
\end{tabular}
\end{center}
\end{small}

The numbers occurring in this table are exactly those appearing in the table in A080928 of the Online Encyclopedia of Integer Sequences~\cite{OEIS}. This leads us to conjecture that with $n=\frac{N-3}{2}$, with $\lambda=L-2$, and with $T$ the function defined in A080928 of the OEIS, we have 
$$
S(N,L) = T(n+\lambda,\lambda) = {n+\lambda \choose n}\cdot 2^{n-1}={\frac{N-3}{2}+L-2\choose \frac{N-3}{2}}\cdot 2^{\frac{N-5}{2}}
$$

(Reference \cite{OEIS} also gives an amusing recurrence relation: for $N\geqslant 7$ and $L\geqslant 3$, 
$
S(N,L)=S(N,L-1)+2\cdot S(N-2,L)
$
where $S(5,L)=L-1$ and $S(N,2)=2^{\frac{N-5}{2}}$.) However, the important conclusion for us is that
for fixed~$N$, the function $S(N,L)$ in the variable $L$ is a polynomial of degree $\frac{N-3}{2}$. Since $|SC(\beta(N,L))|=L\cdot S(N,L)$, this implies:

\begin{observation}\label{O:SchlWie}
Let $N$ be an odd integer with $N\geqslant 5$. Then the size of the sliding circuit set $|SC(\beta(N,L))|$, seen as a function of the variable $L$ (with $L\in\mathbb N$, $N\geqslant 2$), is a polynomial of degree~$\frac{N-1}{2}$. 
\end{observation}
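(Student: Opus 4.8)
The plan is to reduce the Observation to a pure degree count for the quantity $S(N,L)$, which the displayed computation preceding the statement already expresses in closed form. Writing $n=\frac{N-3}{2}$ and $\lambda=L-2$ as in the text, I take as input the formula
$$S(N,L)=\binom{n+\lambda}{n}\cdot 2^{n-1}.$$
Since the factor $2^{n-1}$ is a nonzero constant once $N$ is fixed, everything hinges on recognising the binomial coefficient as a polynomial in $L$ of a definite degree, after which the identity $|SC(\beta(N,L))|=L\cdot S(N,L)$ does the rest.

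First I would rewrite the binomial coefficient as a falling product. Expanding,
$$\binom{n+\lambda}{n}=\frac{(n+\lambda)(n+\lambda-1)\cdots(\lambda+1)}{n!}=\frac{(L+n-2)(L+n-3)\cdots(L-1)}{n!},$$
which is a product of exactly $n$ linear factors in $L$, hence a polynomial in $L$ of degree $n=\frac{N-3}{2}$ with leading coefficient $1/n!$. Multiplying by the nonzero constant $2^{n-1}$ leaves the degree unchanged, so $S(N,L)$ is a polynomial in $L$ of degree $\frac{N-3}{2}$. Finally, since $|SC(\beta(N,L))|=L\cdot S(N,L)$, the size of the sliding circuit set is a polynomial in $L$ of degree $\frac{N-3}{2}+1=\frac{N-1}{2}$, with positive leading coefficient $2^{n-1}/n!$; the positivity is what guarantees the degree is exactly, and not merely at most, $\frac{N-1}{2}$.

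As a more self-contained alternative that avoids committing to the full closed form, I would instead induct on $N$ using only the recurrence $S(N,L)=S(N,L-1)+2\,S(N-2,L)$ together with the base cases $S(5,L)=L-1$ and $S(N,2)=2^{\frac{N-5}{2}}$. The case $N=5$ gives a degree-$1$ polynomial, matching $\frac{N-3}{2}=1$. For the inductive step, the recurrence says that the first finite difference $S(N,L)-S(N,L-1)$ equals $2\,S(N-2,L)$, which by the inductive hypothesis is a polynomial in $L$ of degree $\frac{N-5}{2}$; since summation, the inverse of the finite difference, raises polynomial degree by exactly one, $S(N,\cdot)$ must be a polynomial of degree $\frac{N-5}{2}+1=\frac{N-3}{2}$, with the additive constant pinned down by $S(N,2)$. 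Multiplying by $L$ again yields degree $\frac{N-1}{2}$.

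The genuinely hard part is not this degree bookkeeping but the input on which it rests: the closed form, and equivalently the recurrence, for $S(N,L)$ is obtained experimentally, by matching the computed values against sequence A080928 of the OEIS. A complete proof of the Observation therefore requires first establishing this formula intrinsically, that is, proving directly from the structure of the ouroboroi and the cycling action on the sliding circuit set that the rigid conjugates of $\beta(N,L)$ are counted by $\binom{n+\lambda}{n}2^{n-1}$. I expect this enumeration, rather than the elementary polynomiality argument above, to be the main obstacle; once it is in hand, the degree statement follows immediately from either route.
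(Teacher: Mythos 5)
Your proposal is correct and follows essentially the same route as the paper: the authors likewise take the experimentally extrapolated closed form $S(N,L)=\binom{n+\lambda}{n}2^{n-1}$ as given, observe that for fixed $N$ it is a polynomial of degree $\tfrac{N-3}{2}$ in $L$, and multiply by $L$; they explicitly decline to prove the closed form itself. Your expansion of the binomial coefficient into $n$ linear factors, the recurrence-based alternative, and your identification of the unproven enumeration as the real gap are all consistent with (and slightly more careful than) what the paper records.
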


For instance, for $N=5$ we obtain that $|SC(\beta(5,L))|=L(L-1)$, similarly $|SC(\beta(7,L))|=L^2(L-1)$, and $|SC(\beta(9,L))|=\frac23 L^2(L+1)(L-1)$.

As mentioned in the introduction to this section, we are not going to give a formal proof of the equality $S(N,L)=T(n+\lambda,\lambda)$ and of Observation~\ref{O:SchlWie}. 
However, in order to understand where the polynomial growth comes from, we look at the example $N=7$ and actually prove:

\begin{proposition}\label{P:SWlowerbound} 
If $L\geqslant 3$, the sliding circuit set $SC(7,L)$ has at least $\frac{L(L-1)(L-2)}{2}$ elements.
\end{proposition}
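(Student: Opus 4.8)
The plan is to argue constructively: I will write down an explicit family of rigid braids lying in $SC(7,L)$, indexed by the positions of three \emph{slithering} round ouroboroi, and then show that distinct configurations give distinct braids. The cubic growth will come from the three independently placeable tubes, each having $O(L)$ possible head-tail positions; the fact that the resulting bound is polynomial rather than exponential is built into this picture, since a head-tail can only sit in one of the $L$ factors of the normal form.

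First I would fix one explicit rigid representative. The braid $\beta(7,L)=(\sigma_1\sigma_3\sigma_5)^{L+1}\sigma_1\sigma_2\sigma_3\sigma_4\sigma_5\sigma_6$ has length $L+2$ but is conjugate to a rigid braid of length $L$, so I would carry out the reduction explicitly, by a short sequence of cyclings, to obtain a rigid word $x_0=s_1.s_2.\ldots.s_L$ whose simple factors are built from the three disjoint bands $\sigma_1,\sigma_3,\sigma_5$ and their conjugates $\sigma_2,\sigma_4,\sigma_6$. Rigidity is then verified by checking left-weightedness at every cyclic junction, which is easy here because non-adjacent bands commute. In $x_0$ I would locate three round ouroboroi, one encircling each pair of punctures $\{1,2\}$, $\{3,4\}$, $\{5,6\}$, and record the factor in which each head-tail sits.

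The core of the argument is a single \emph{slithering move}: an explicit, rigidity-preserving conjugation that shifts one head-tail from factor $j$ to factor $j\pm1$ while leaving the other two tubes fixed. Because the bands in disjoint factors commute, such a move only rewrites the normal form locally and keeps it left-weighted everywhere, so the output is again rigid and, being conjugate to $x_0$, again lies in $SC(7,L)$. Iterating these moves, every admissible triple of head-tail positions $\mathbf p=(p_1,p_2,p_3)$ produces a rigid conjugate $x_{\mathbf p}$.

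It remains to count and to separate. I would call a triple admissible when the three head-tails occupy distinct factors; coincident or adjacent head-tails force two bands to collide and destroy rigidity, which is precisely why the count falls short of the naive $L^3$. After removing the inadmissible triples and identifying the pairs related by the evident strand-reversal symmetry $\sigma_i\mapsto\sigma_{N-i}$ (which interchanges two of the tubes), the number of surviving configurations is at least $\frac{L(L-1)(L-2)}{2}$. Distinctness of the $x_{\mathbf p}$ I would prove by reading $\mathbf p$ back off the normal form: away from its head-tail each ouroboros contributes a fixed periodic block of factors, and the head-tails appear as the unique ``defects'' interrupting this periodicity, so $\mathbf p$ is recoverable from $x_{\mathbf p}$. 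The main obstacle I expect is exactly this bookkeeping at the ends of the slithering moves --- confirming that rigidity genuinely survives as a head-tail passes a given factor, and pinning down the admissibility and symmetry identifications precisely enough to justify the stated $\frac{L(L-1)(L-2)}{2}$ rather than a weaker cubic bound.
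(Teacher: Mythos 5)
Your overall strategy is the same as the paper's: exhibit an explicit family of rigid conjugates of $\beta(7,L)$ parametrised by the positions of the three slithering ouroboroi, check distinctness by reading the positions back off the normal form, and count. The paper does this by writing the words down concretely, indexed by the gaps $(a,b,c)$ with $a,b,c\geqslant 0$ and $a+b+c=L-3$ between the three head-tail factors; this gives $\binom{L-1}{2}$ cycling-orbits, and multiplying by the $L$ distinct cyclic conjugates yields $L\cdot\frac{(L-1)(L-2)}{2}$.

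However, your counting step contains two concrete errors that would prevent you from reaching the stated bound. First, you assert that adjacent head-tails ``force two bands to collide and destroy rigidity'' and must be excluded. This is false: the paper's family explicitly allows $a=0$ (and $b=0$, $c=0$), i.e.\ two head-tail factors sitting in consecutive Garside factors, and these braids are rigid elements of the sliding circuit set. Moreover, if you really did exclude adjacent configurations, the number of ways to place three labelled, pairwise non-adjacent head-tails on a cycle of length $L$ falls strictly below $\frac{L(L-1)(L-2)}{2}$ (for $L=11$ one gets at most $77\cdot 6=462$ ordered placements before any further identification, against the required $495$), so the bound would not follow. Second, the division by $2$ is misattributed. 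The reason only half of the $L(L-1)(L-2)$ ordered triples of distinct positions occur is that the cyclic order of the three ouroboroi around the closed braid is fixed --- they block one another and cannot slither past each other --- so only one of the two cyclic orders is realisable, giving exactly $3\binom{L}{3}=\frac{L(L-1)(L-2)}{2}$ configurations. It is not an identification under $\sigma_i\mapsto\sigma_{7-i}$: that map is conjugation by $\Delta$, which sends an element of $SC$ to a generally \emph{different} element of $SC$ rather than identifying two of your configurations; and in any case quotienting by a genuine symmetry would shrink your set of braids, which is the wrong direction for a lower bound. With the adjacency exclusion removed and the factor of $2$ justified by the fixed cyclic order, your argument lines up with the paper's.
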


\begin{proof} 
If $L\geqslant 3$, the sliding circuit set $SC(\beta(7,L))$ contains the elements of length~$L$
$$
\sigma_2 \sigma_1 \sigma_4 \sigma_6. (\sigma_1 \sigma_4 \sigma_6 .)^a  \sigma_1 \sigma_4 \sigma_3 \sigma_5 \sigma_6 \sigma_5. ( \sigma_1 \sigma_3 \sigma_5.)^b  \sigma_5 \sigma_4 \sigma_3 \sigma_2 \sigma_1 \sigma_6 (. \sigma_2 \sigma_4 \sigma_6)^c
$$ 
and their cyclic conjugates. Here $(a,b,c)$ are all triples of integers between $0$ and $L-3$ with $a+b+c=L-3$. 
(The left hand side of Figure~\ref{F:SchlWieEx} shows the example with $a=2, b=3, c=3$.) There are ${L-1 \choose 2}=\frac{(L-1)(L-2)}{2}$ such triples. If we also count the cyclic conjugates of these braids, which are all different, we find a total of $L\cdot \frac{(L-1)(L-2)}{2}$ elements
\end{proof}

\begin{figure}[htb] 
\begin{center}
\includegraphics[width=100mm]{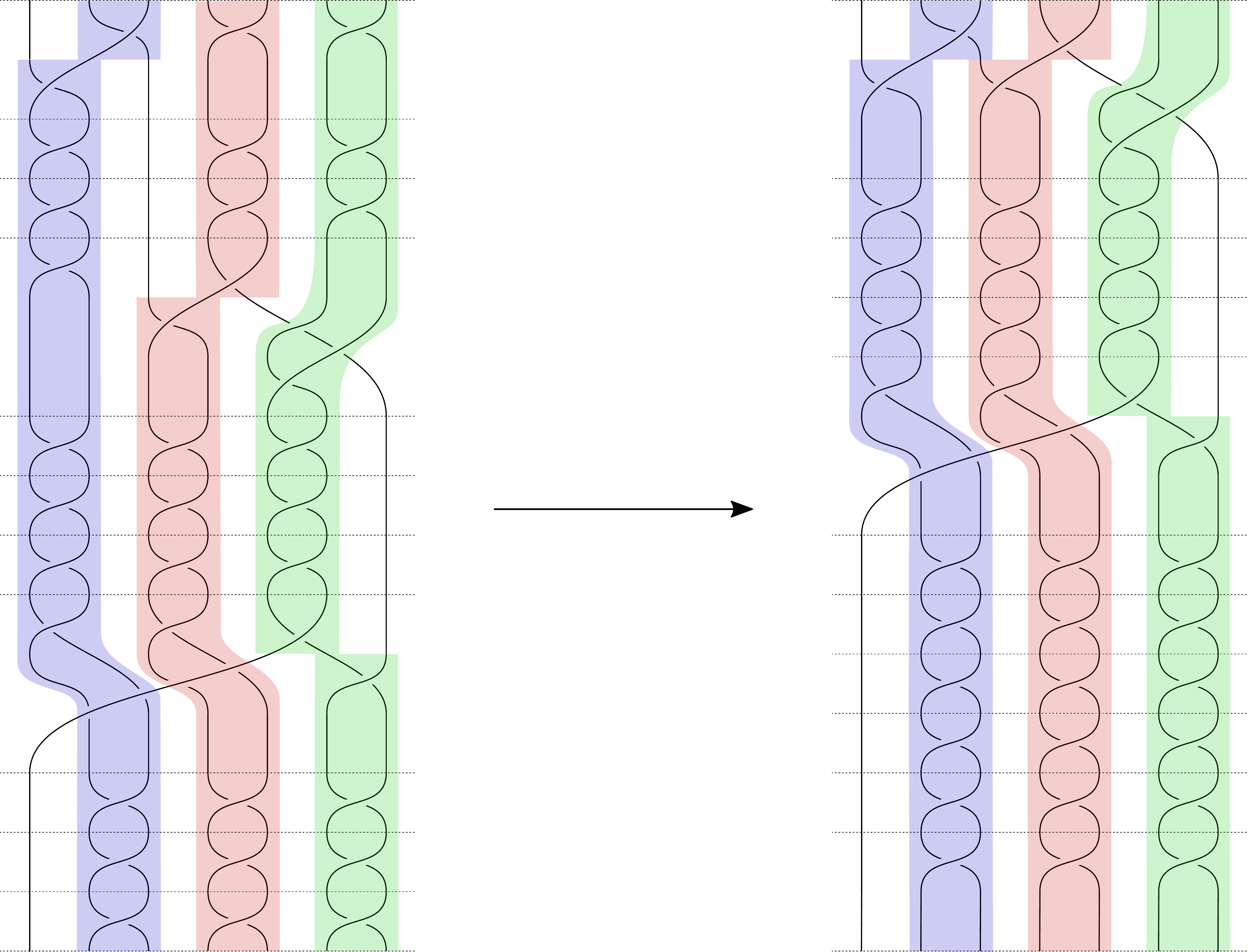}
\end{center}
\caption{Slithering ouroboroi in $SC(\beta(7,11))$. The arrow represents conjugation by $(\sigma_4\sigma_6)^3$}\label{F:SchlWieEx}
\end{figure}

Let us study Figure~\ref{F:SchlWieEx}, i.e.\ the example of $\beta(7,11)$, in more detail. This example is very instructive, because it is very easy to see three ouroboroi, shown in blue, red, and green in the figure. Let us denote $\beta$ the braid on the left hand side of the figure, where $a=2, b=3$, and $c=3$. Conjugating $\beta$ by positive or negative powers of $\sigma_2$, $\sigma_4$ and $\sigma_6$ has the effect of varying the coefficients $a$, $b$, and $c$, which, visually, corresponds to ``slithering the ouroboroi''.

For instance, conjugating $\beta$ by $(\sigma_4 \sigma_6)^k$ for $k=1$ or~$2$ slithers the red and green ouroboros upwards simultaneously. We can even conjugate by $(\sigma_4 \sigma_6)^3$, which creates the braid on the right hand side of the figure, where the blue and red head-tail occur in the same factor. The red ouroboros cannot be slithered upwards any further relative to the blue one, because it is blocked above. The green one can slither up three more factors, but a detailed calculation shows that the three head-tails can never occur all in the same factor.  Similarly, starting with~$\beta$ and conjugating by $\sigma_4^{-1}$ three times slithers the red ouroboros down relative to the blue and green ones -- at which point the red ouroboros gets stuck against the green one. Intuitively, the two degrees of freedom for the relative position of the ouroboroi yields the quadratic growth of $S(7,L)$.

\begin{remark}\begin{enumerate}
\item The three ouroboroi have exactly the same length. If this requirement is violated, the growth of the sliding circuit set (as a function of the length) goes down. For instance, we can modify the definition of $\beta(7,L)$, and declare $\widetilde\beta(7,L)=(\sigma_1 \sigma_3 \sigma_5.)^L \ \sigma_1 \sigma_3. \ \sigma_1\sigma_2\sigma_3\sigma_4\sigma_5\sigma_6$ -- i.e., remove one crossing from the green ouroboros. Then $|SC(\widetilde\beta(7,L))|$ grows only quadratically with~$L$. 
\item For $L=0$, the braids $\beta(N,L)=(\sigma_1 \sigma_3 \sigma_5 \ldots \sigma_{N-2}.)^{L+1} \sigma_1 \sigma_2 \sigma_3 \ldots \sigma_{N-1}$ are conjugate to rigid but \emph{reducible} braids, with remarkably large sliding circuit sets: for $N=5, 7, 9, \ldots, 17$ we find sliding circuit sets of size  4, 2, 30, 294, 2520, 20460, 162162.
For $L=1$ the braids $\beta(N,L)$ are pseudo-Anosov, but not conjugate to rigid braids, and their sliding circuit sets have $(N+3)\cdot 2^{\frac{N-7}2}$ elements.
\end{enumerate}
\end{remark}


\subsection{An example with nested ouroboroi}\label{SS:NestedOuroboroi}

In this section we will exhibit an example of a family of rigid pseudo-Anosov braids whose sliding circuit sets grow remarkably fast as a function of both the number of strands and of the length of the braid. The basic trick will be to pack the ouroboroi extremely tightly by nesting them inside each other.

Our family of braids $\gamma(N,L)$ can have any even number $N\geqslant 4$ of strands and any odd number $L=N-1+2 K$ (with $K\geqslant 0$) of Garside factors. (More precisely, the first Garside factor is $\Delta$, so our braids have $\inf(\gamma(N,L))=1$ and $\sup(\gamma(N,K))=N-1+2 K$.) The main result of this section is Theorem~\ref{T:Main} which we recall now: 

\begin{unnumberedtheorem} The braids $\gamma(N,L)$ are rigid pseudo-Anosov braids, and their sliding circuit sets have size
$$
|SC(\gamma(N,L))|=2(L-1)\cdot L^{N-3}
$$
More precisely, there are $L^{N-3}$ orbits under the cycling operation, and each of these orbits has $2(L-1)$ elements. There is one exception: for $N=4, L=3$ a particular symmetry occurs, resulting in $|SC(\gamma(4,3)|=6$, rather than 12.
\end{unnumberedtheorem}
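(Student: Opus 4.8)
The plan is to prove the three assertions — rigidity, pseudo-Anosovness, and the exact size of $SC(\gamma(N,L))$ — separately, reserving most of the effort for the last, which I would attack through the combinatorics of slithering nested ouroboroi.

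First I would verify rigidity directly from the explicit normal form. Since $\inf(\gamma(N,L))=1$, the condition to check is that $x_\ell.\Delta^{-1}x_1\Delta$ is left-weighted, where $x_1.\ldots.x_\ell$ (with $\ell=L-1$) are the simple factors following $\Delta$. Because the head-tail factors of the ouroboroi have a fixed shape independent of $L$, this reduces to a comparison of finishing and starting sets for finitely many factors and does not grow with $L$. For the pseudo-Anosov claim I would eliminate the periodic case at once (the braid has infinite order and is not a root of a power of $\Delta$) and then rule out reducibility using Bernardete--Nitecki--Guti\'errez: an essential invariant multicurve would have to be carried by round curves through every factor of the normal form, and I would show that the nesting forces the braid to slither any such candidate tube system, so no invariant multicurve survives; Nielsen--Thurston then gives pseudo-Anosov.

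The heart of the matter is the count. I would identify the chain of $N-2$ nested ouroboroi and introduce as coordinates the $N-3$ relative offsets $(d_1,\ldots,d_{N-3})$ of consecutive ouroboroi, each offset ranging over $\{0,1,\ldots,L-1\}$, together with a cycling shift. The easy half is the lower bound: for each offset vector I would exhibit an explicit conjugator (a product of band generators slithering one ouroboros past its neighbour), check that the resulting braid is again rigid of length $L$, and check that distinct offset vectors, and distinct cycling shifts, yield distinct braids; this produces $L^{N-3}$ cycling orbits. To pin down the orbit size, note that since $\inf=1$ is odd, each cycling conjugates the moved factor by $\Delta$, so $\mathbf{c}^{\ell}$ applies the twist $\tau=\Delta^{-1}(\,\cdot\,)\Delta$ to the whole factor sequence and only $\mathbf{c}^{2\ell}$ returns to the start; hence a configuration that is not $\tau$-symmetric has cycling orbit of size exactly $2\ell=2(L-1)$. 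Generic configurations are not $\tau$-symmetric, which gives the factor $2(L-1)$, while the global $\tau$-symmetry occurring at $(N,L)=(4,3)$ halves the orbits and accounts for the exceptional value $6$.

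The genuinely hard step is completeness: that these $2(L-1)\cdot L^{N-3}$ braids are \emph{all} of $SC(\gamma(N,L))$. Here I would work in the sliding circuit graph of Gebhardt--Gonz\'alez-Meneses, whose vertices are the elements of $SC$ and whose edges are conjugations by minimal simple elements; since this graph is connected, it suffices to prove that the described family is closed under every such minimal move. The target is a ``no escape'' lemma: from any braid in the family, a minimal conjugation is either a cycling move or slithers a single ouroboros by one step relative to its neighbours, and in both cases the $N-2$ nested tubes persist and the result stays in the family. Establishing this requires knowing exactly which minimal simple elements preserve left-weightedness of the cyclic word, and checking that the nesting blocks any move that would merge two head-tails into one factor or sever a tube. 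This is precisely the step the authors describe as having a prohibitive number of case checks, and it is the main obstacle: the difficulty is to organise the slithering moves so that the verification is uniform in $N$ and $L$ rather than a case analysis whose size explodes. The cleanest route would be to show that the coordinate map intertwines minimal conjugations with the evident combinatorial moves on offset-and-shift coordinates, so that connectivity of the coordinate model yields completeness.
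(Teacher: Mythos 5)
Your proposal is a strategy outline rather than a proof, and it is worth saying up front that the paper itself does not prove this statement either: the authors explicitly state that the formal proof ``is tedious, and is omitted,'' verify the formula by computer for a table of pairs $(N,L)$ using the program of Cha and Gonz\'alez-Meneses, and only \emph{prove} the weaker lower bound of Lemma~\ref{L:SClowerbound}, namely $|SC(\gamma(N,L))|\geqslant 2(L-1)\binom{K+N-3}{N-3}$ with $L=N-1+2K$. That proven bound comes from the explicit family $\gamma(N,k_1,\ldots,k_{N-2})$ with $k_1+\cdots+k_{N-2}=K$, realised by conjugating by full twists on the strands inside each nested ouroboros. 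This is where your ``easy half'' has a genuine gap: you parametrise the cycling orbits by $N-3$ relative offsets each ranging \emph{independently} over $\{0,\ldots,L-1\}$, which would give exactly $L^{N-3}$ orbits, but the slithering construction does not deliver this. The parameters it produces are non-negative integers constrained to sum to $K\approx L/2$, yielding only $\binom{K+N-3}{N-3}\sim (L/2)^{N-3}/(N-3)!$ orbits --- asymptotically far fewer than $L^{N-3}$. To reach the exact count you would need to exhibit (and prove rigidity of) a substantially larger family of conjugates than the slithering moves provide, and you give no construction or argument for why all $L^{N-3}$ offset vectors are realised by distinct rigid braids. So even the lower bound in your plan is not established at the claimed strength.

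The second gap is the one you name yourself: completeness. Your proposed ``no escape'' lemma in the sliding circuit graph (every minimal conjugation from a braid in the family either cycles or slithers one ouroboros, and the family is closed under such moves) is exactly the step whose case analysis the authors found prohibitive, and you offer no mechanism for making the verification uniform in $N$ and $L$ beyond the hope that the coordinate map intertwines minimal conjugations with combinatorial moves. Since that coordinate map is itself not yet defined correctly (see the previous paragraph), this part cannot currently be carried out. Your treatment of the orbit length $2(L-1)$ via the $\Delta$-twist $\tau$ is consistent with the paper's reasoning, and your sketches for rigidity and the pseudo-Anosov property are plausible but likewise not carried out (the paper asserts these without proof as well). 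In short: your picture of nested, slithering ouroboroi matches the authors' heuristic, but neither the exact orbit count nor the completeness claim is proved, and the independence of the offset coordinates is not merely unproved but inconsistent with the only rigorous construction available.
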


As mentioned in the introduction to this section, the formal proof of this result is tedious, and is omitted.
We will give the results of our calculations using the program~\cite{JuanProgram}, and we assert that the formulae observed in the calculated examples continue to hold for general values of $N$ and~$L$.

We now explain the construction of the braid $\gamma(N,L)$. It has $L=N-1+2K$  factors, which come in three blocks:
\begin{enumerate}
\item first one factor $\Delta$
\item then $N-2$ factors which we call head-tail factors, because they correspond to the head-tails of $N-2$ ouroboroi, and finally
\item two Garside factors, which we call the body-factors because they correspond to the bodies of all the ouroboroi, which are repeated $K$ times.
\end{enumerate}

The first block requires no explanation. In order to present the other blocks, we introduce some notation. 

For any fixed integer~$N$, we say a \emph{down-up sequence} is a writing of the numbers $1,2,\ldots,N$ permuted in such a way that it consists of a (possibly empty) decreasing sequence of numbers, followed by a (possibly empty) increasing sequence of numbers. For instance, for $N=10$, the sequences $(8\ 7\ 4\ 3\ 1\ 2\ 5\ 6\ 9\ 10)$ and $(9\ 8\ 5\ 3\ 2\ 1\ 4\ 6\ 7\ 10)$ are down-up sequences, whereas $(10\ 8\ 5\ 7\ 2\ 1\ 3\ 4\ 6\ 9)$ is not. We call the numbers occurring in such a sequence \emph{labels}, because we will use them to label strands of a braid.

For any subset $A\subseteq \{1,2,\ldots N\}$ there is an involution $\phi_A$ on the set of  down-up sequences, defined as follows: if $S$ is a down-up sequence, then in the down-up sequence $\phi_A(S)$, every label belonging to~$A$ switches its position relative to all lower labels, whereas all labels in the complement of~$A$ retain their position relative to all lower labels. We say ``All labels in~$A$ switch sides''. For instance, 
if $N=8$ and $A=\{2, 4, 6, 8\}$, then
$$
(8\ 7\ 4\ 3\ 2\ 1\ 5\ 6) \ \stackrel{\phi_A}{\longmapsto} \ (7\ 6\ 3\ 1\ 2\ 4\ 5\ 8)  \ \stackrel{\phi_A}{\longmapsto} \ (8\ 7\ 4\ 3\ 2\ 1\ 5\ 6)
$$

Now in order to define our braid $\gamma(N,L)$ with $N$ strands and $L=N-1+2K$ factors, we first define a sequence of $L+1=N+2K$ down-up sequences.  For the example $N=8$, see Figure~\ref{F:BadSymbolic} (ignoring the fat line segments for the moment). 

The first sequence consists of the integers between 1 and $N$ congruent to 0 or 3 modulo~4 in descending order, followed by the remaining integers in ascending order. The second sequence (at the junction between the first block and the second block) is just the reverse of the first sequence (all labels have switched sides).

Concerning the second block: here each sequence is obtained from the preceding one by making all even labels switch sides -- except that in each step, one label behaves in an unexpected way. Specifically, starting from the second sequence, we obtain the third one by making all even labels \emph{except $N$} switch sides. We go from the third to the fourth sequence by making all even labels \emph{and label $N-1$} switch sides, and so on through labels $N, N-1, \ldots, 4, 3$. Thus the $N$th sequence (at the junction between the second and third block) is identical to the first one, except that labels 1 and 2 have exchanged their positions. (We recall that $N$ is even.)

The third block is simple again: we go from each down-up sequence to the next by making all even labels switch sides -- thus we simply go back and forth between two sequences $K$~times. This completes our description of a sequence of down-up sequences. 

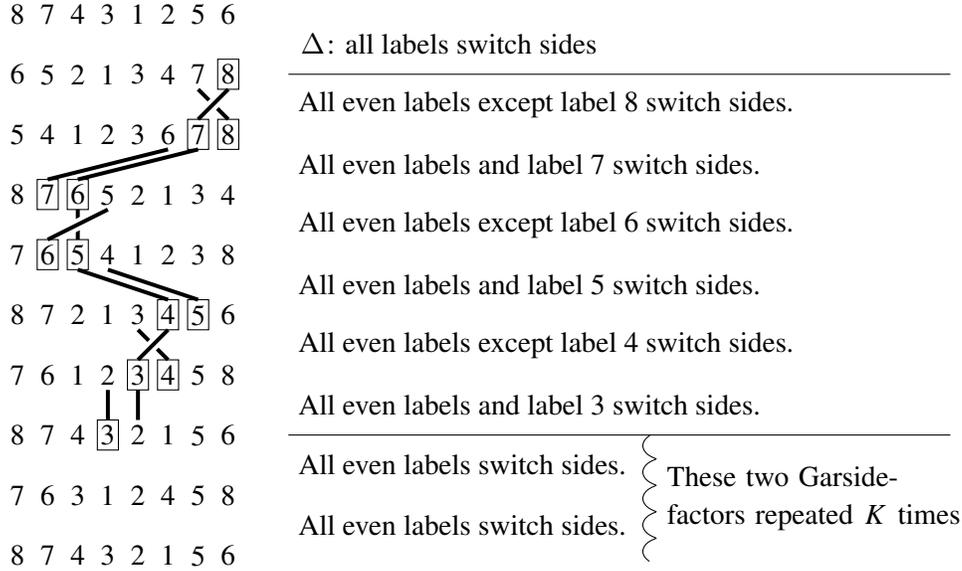
\begin{figure}[htb] 
\pgfdeclarelayer{deepest}
\pgfdeclarelayer{middle}
\pgfsetlayers{deepest,middle,main}
\begin{tikzpicture}[scale=0.4]
\draw (1,18) node{8};
\draw (2,18) node{7};
\draw (3,18) node{4};
\draw (4,18) node{3};
\draw (5,18) node{1};
\draw (6,18) node{2};
\draw (7,18) node{5};
\draw (8,18) node{6};

\draw (10,17) node[right]{$\Delta$: all labels switch sides};

\draw (1,16) node{6};
\draw (2,16) node{5};
\draw (3,16) node{2};
\draw (4,16) node{1};
\draw (5,16) node{3};
\draw (6,16) node{4};
\draw (7,16) node{7};
\draw (8,16) node{8}; \draw (8,16)+(-\bx,-\by) rectangle +(\bx,\by);
\draw (10,16) -- (32,16);

\draw[line width = \Bdiag] (7,14+\by) -- (8,16-\by);
\begin{pgfonlayer}{middle} \draw[white,line width = 3*\Bdiag] (7,14+\by) -- (8,16-\by);  \end{pgfonlayer}
\begin{pgfonlayer}{deepest} \draw[line width = \Bdiag] (8,14+\by) -- (7,16-\by); \end{pgfonlayer}
\draw (10,15) node[right]{All even labels except label 8 switch sides.};

\draw (1,14) node{5};
\draw (2,14) node{4};
\draw (3,14) node{1};
\draw (4,14) node{2};
\draw (5,14) node{3};
\draw (6,14) node{6};
\draw (7,14) node{7}; \draw (7,14)+(-\bx,-\by) rectangle +(\bx,\by);
\draw (8,14) node{8}; \draw (8,14)+(-\bx,-\by) rectangle +(\bx,\by);

\draw[line width = \Bdiag] (2,12+\by) -- (6,14-\by);
\draw[line width = \Bdiag] (3,12+\by) -- (7,14-\by);
\draw (10,13) node[right]{All even labels and label 7 switch sides.};

\draw (1,12) node{8};
\draw (2,12) node{7}; \draw (2,12)+(-\bx,-\by) rectangle +(\bx,\by);
\draw (3,12) node{6}; \draw (3,12)+(-\bx,-\by) rectangle +(\bx,\by);
\draw (4,12) node{5};
\draw (5,12) node{2};
\draw (6,12) node{1};
\draw (7,12) node{3};
\draw (8,12) node{4};

\draw[line width = \Bdiag] (2,10+\by) -- (4,12-\by);
\begin{pgfonlayer}{middle} \draw[white,line width = 3*\Bdiag] (2,10+\by) -- (4,12-\by);  \end{pgfonlayer}
\begin{pgfonlayer}{deepest} \draw[line width = \Bdiag] (3,10+\by) -- (3,12-\by); \end{pgfonlayer}
\draw (10,11) node[right]{All even labels except label 6 switch sides.};

\draw (1,10) node{7};
\draw (2,10) node{6}; \draw (2,10)+(-\bx,-\by) rectangle +(\bx,\by);
\draw (3,10) node{5}; \draw (3,10)+(-\bx,-\by) rectangle +(\bx,\by);
\draw (4,10) node{4};
\draw (5,10) node{1};
\draw (6,10) node{2};
\draw (7,10) node{3};
\draw (8,10) node{8};

\draw[line width = \Bdiag] (6,8+\by) -- (3,10-\by);
\draw[line width = \Bdiag] (7,8+\by) -- (4,10-\by);
\draw (10,9) node[right]{All even labels and label 5 switch sides.};

\draw (1,8) node{8};
\draw (2,8) node{7};
\draw (3,8) node{2};
\draw (4,8) node{1};
\draw (5,8) node{3};
\draw (6,8) node{4}; \draw (6,8)+(-\bx,-\by) rectangle +(\bx,\by);
\draw (7,8) node{5}; \draw (7,8)+(-\bx,-\by) rectangle +(\bx,\by);
\draw (8,8) node{6};

\draw[line width = \Bdiag] (5,6+\by) -- (6,8-\by);
\begin{pgfonlayer}{middle} \draw[white,line width = 3*\Bdiag] (5,6+\by) -- (6,8-\by);  \end{pgfonlayer}
\begin{pgfonlayer}{deepest} \draw[line width = \Bdiag] (6,6+\by) -- (5,8-\by); \end{pgfonlayer}
\draw (10,7) node[right]{All even labels except label 4 switch sides.};

\draw (1,6) node{7};
\draw (2,6) node{6};
\draw (3,6) node{1};
\draw (4,6) node{2};
\draw (5,6) node{3}; \draw (5,6)+(-\bx,-\by) rectangle +(\bx,\by);
\draw (6,6) node{4};\draw (6,6)+(-\bx,-\by) rectangle +(\bx,\by);
\draw (7,6) node{5};
\draw (8,6) node{8};

\draw[line width = \Bdiag] (4,6-\by) -- (4,4+\by);
\draw[line width = \Bdiag] (5,6-\by) -- (5,4+\by);
\draw (10,5) node[right]{All even labels and label 3 switch sides.};

\draw (1,4) node{8};
\draw (2,4) node{7};
\draw (3,4) node{4};
\draw (4,4) node{3}; \draw (4,4)+(-\bx,-\by) rectangle +(\bx,\by);
\draw (5,4) node{2};
\draw (6,4) node{1};
\draw (7,4) node{5};
\draw (8,4) node{6};
\draw (10,4) -- (32,4);

\draw (10,3) node[right]{All even labels switch sides.};

\draw (1,2) node{7};
\draw (2,2) node{6};
\draw (3,2) node{3}; 
\draw (4,2) node{1};
\draw (5,2) node{2};
\draw (6,2) node{4};
\draw (7,2) node{5};
\draw (8,2) node{8};

\draw (10,1) node[right]{All even labels switch sides.};

\draw (1,0) node{8};
\draw (2,0) node{7};
\draw (3,0) node{4};
\draw (4,0) node{3};
\draw (5,0) node{2};
\draw (6,0) node{1};
\draw (7,0) node{5};
\draw (8,0) node{6};
\draw[decorate, decoration=coil] (22,-0.2) -- node[right=0.1cm,black, text width=3.9cm]{These two Garside-factors repeated $K$ times} (22,4);
\end{tikzpicture}
\caption{A symbolic picture of the braid $\gamma(8,7+2K)$. A box \fbox{\it i} means that the label $i$ is behaving unexpectely: an odd label~$i$ switching sides, or an even label~$i$ \emph{not} switching sides. Here is how to obtain a braid from this picture: the strands of the braid connect equal labels, except where indicated by bold lines.}\label{F:BadSymbolic}
\end{figure}

How do we obtain a braid from this sequence of down-up sequences? We recall that positive permutation braids (our generating set of the braid group) are in bijection with the permutations of $N$ symbols. The basic rule for constructing our braid is the obvious one: in each step from one down-up sequence to the next we use the positive braid which induces the given permutation. In other words, we connect equal labels by a string -- for an example with $N=6$, see Figure~\ref{F:BadBraid} with the values $k_1=K$ and $k_2=k_3=k_4=0$.

In the first and third block, this is indeed precisely how we construct the braid. However, in the second block there is one exception in every factor. Namely, in the factor in which label $i$ is behaving unexpectedly, we choose to have no strand connecting label $i$ to label $i$ and label $i-1$ to label $i-1$, but the two are exchanged: there are two strands connecting labels $i$ and $i-1$. This exceptional behaviour is symbolically indicated in Figure~\ref{F:BadSymbolic}, and it is shown with black strands in Figure~\ref{F:BadBraid} (which should again be taken with $k_1=K$ and $k_2=k_3=k_4=0$). This completes our description of the braid~$\gamma(N,L)$.

\begin{figure} 
\input{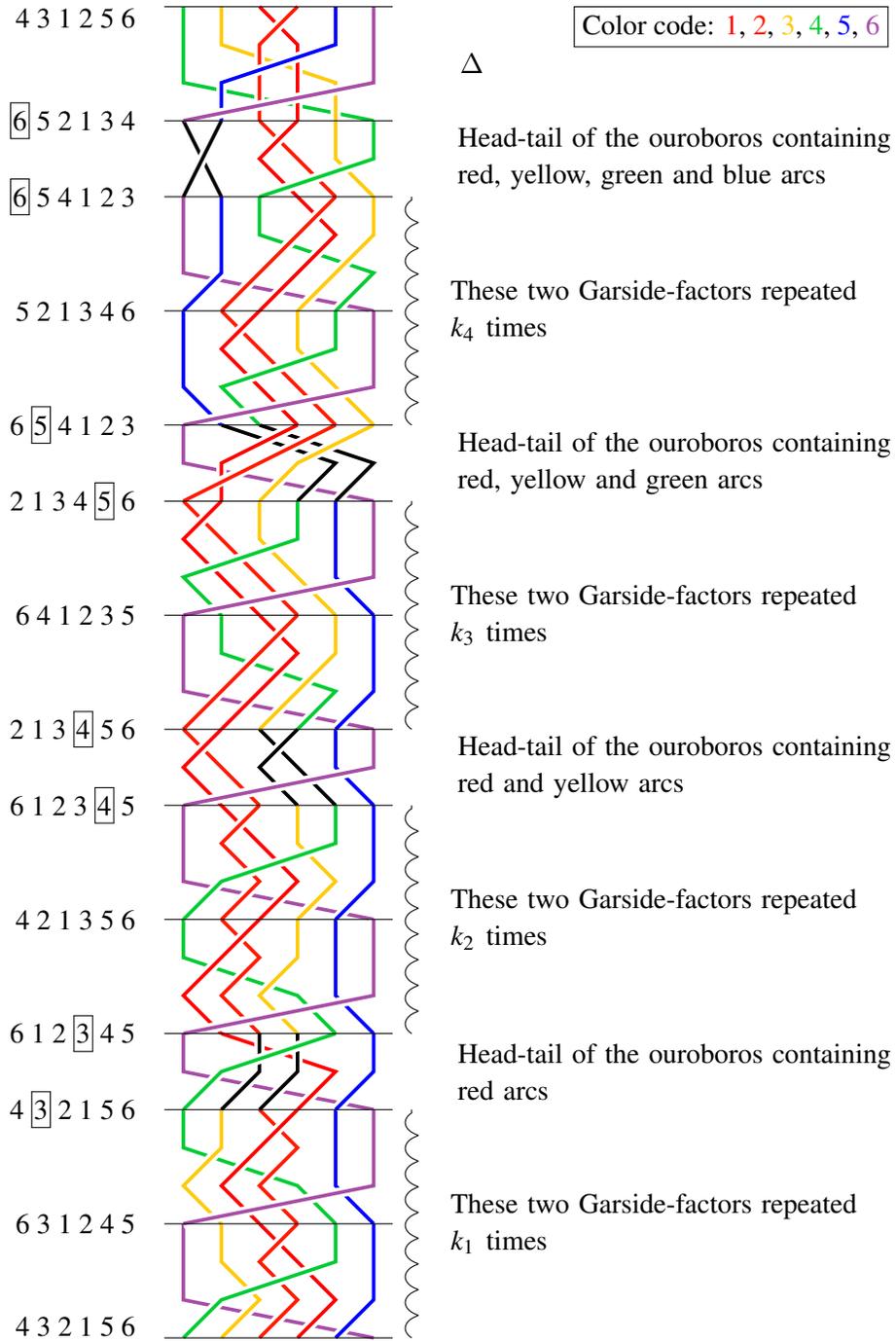}
\caption{The braid $\gamma(8,7+2K)$ is obtained with $k_1=K$ and $k_2=k_3=k_4=0$.}
\label{F:BadBraid}
\end{figure}

\begin{example} Let us write down the braids $\gamma(N,L)$ explicitly for $N=4$ and $N=6$ using Artin generators. Keeping in mind that $L=N-1+2K$, we obtain
\begin{itemize}
\item $\gamma(4,L)=\Delta_4$ . $\sigma_1 \sigma_3$ . $\sigma_1 \sigma_2 \sigma_3 \sigma_2 \sigma_1$ . $(\sigma_1 \sigma_2 \sigma_3 \sigma_2$ . $\sigma_2 \sigma_3 \sigma_2 \sigma_1 .)^K$
\item $\gamma(6,L)=\Delta_6$ . $\sigma_1 \sigma_3 \sigma_5 \sigma_4 \sigma_3$ . $\sigma_1 \sigma_2 \sigma_3 \sigma_2 \sigma_1 \sigma_4 \sigma_3 \sigma_2 \sigma_1 \sigma_5 \sigma_4 \sigma_3$ . $\sigma_1 \sigma_3 \sigma_5 \sigma_4 \sigma_3 \sigma_2 \sigma_1$~. $\sigma_1 \sigma_2 \sigma_1 \sigma_3 \sigma_2 \sigma_4 \sigma_3 \sigma_2 \sigma_1 \sigma_5$ . $(\sigma_1 \sigma_2 \sigma_3 \sigma_2 \sigma_5 \sigma_4 \sigma_3 \sigma_2 \sigma_1$ . $\sigma_1 \sigma_2 \sigma_3 \sigma_2 \sigma_4 \sigma_3 \sigma_2 \sigma_1 \sigma_5 .)^K$
\item For the sake of brevity, we are not discussing the case of an odd number of strands in this paper. There is, of course, an analogue construction. In the five strand case, for instance, we set $\gamma(5,L)=\Delta_5$ . $\sigma_1 \sigma_2 \sigma_3 \sigma_2 \sigma_1$ . $(\sigma_1 \sigma_3 \sigma_2 \sigma_1$~. $\sigma_1 \sigma_2 \sigma_1 \sigma_3 .)^\kappa$ $\sigma_1 \sigma_3 \sigma_2 \sigma_1 \sigma_4$ (with $L=3+2\kappa$). It can be shown that the sliding circuit set of $\gamma(5,L)$ has again exactly $2(L-1)L^{N-3}=2(L-1)L^2$ elements.
\end{itemize}
\end{example}

Next we have to investigate the properties of our braids $\gamma(N,L)$. 
Here is a table, containing for each value of $N$ and $L$ which we have checked, the size of the sliding circuit set $|SC(\gamma(L,C))|$, as determined by the program~\cite{JuanProgram}. We did not check for $N=8$ with larger values of~$L$, or for any larger values of~$N$, because the calculations become too long. Apart from the previously mentioned exceptional pair $(N=4, L=3)$, our calculations confirm that the sliding circuit set has $2(L-1)L^{N-3}$ elements, consisting of $L^{N-3}$ cycling-orbits.

\begin{center}
\begin{tabular}{c||c|c|c|c|c|c|c|c|c|c|c|}
$N \backslash L$  & \ 3 \ & 5 & 7 & 9 & 11 & 13 & 15 & 17\\ \hline\hline
4 & (6) & 40 & 84 & 144 & 220 & 312 & 420 & 544 \\
6 & -- & 1000 & 4116 & 11664 & 26620 & 52728 & 94500 & 157216 \\ 
8 & -- & -- & 201684 & 944784 &  &  & \\
\end{tabular}
\end{center}

While the precise formulae for these sizes may be difficult to prove formally, there is a nice argument, involving subsurfaces and ouroboroi, for a lower bound:

\begin{lemma}\label{L:SClowerbound}
$$|SC(\gamma(L,C))|\ \geqslant \ 2(L-1)\cdot {K+N-3 \choose N-3} \ = \ 2(L-1)\cdot {\frac{L-5+N}{2} \choose N-3}$$
(recalling that $L=N-1+2K$). Thus for any fixed~$N$ and as a function of~$L$, the sliding circuit set grows at least like a polynomial of degree~$N-2$.
\end{lemma}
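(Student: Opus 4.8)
The plan is to produce an explicit list of rigid conjugates of $\gamma(N,L)$ and count them, following the strategy already used in Proposition~\ref{P:SWlowerbound}. First I would introduce the generalized braids $\gamma(N,L;k_1,\dots,k_{N-2})$ that are implicit in Figure~\ref{F:BadBraid}: for every $(N-2)$-tuple of non-negative integers with $k_1+\dots+k_{N-2}=K$, one inserts $k_i$ copies of the two body-factors in the $i$th gap between consecutive head-tail factors, so that $\gamma(N,L)=\gamma(N,L;K,0,\dots,0)$. Each such braid has the same length $L$ and, being assembled from the same down-up sequences as $\gamma(N,L)$, has a cyclic normal form that is left-weighted at every junction; hence each $\gamma(N,L;k_1,\dots,k_{N-2})$ is rigid.

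Next I would show that all of these braids are conjugate to $\gamma(N,L)$, so that they all lie in $SC(\gamma(N,L))$. Geometrically the $N-2$ nested ouroboroi are round tubes (by~\cite{BNG}), and moving a single body-pair from the $i$th gap to the $(i+1)$st gap is realized by \emph{slithering} the $i$th ouroboros one step along its tube relative to its neighbours; this slither is conjugation by an explicit positive simple element, and a direct check on the normal form shows that the result is again rigid and equals $\gamma(N,L;\dots,k_i-1,k_{i+1}+1,\dots)$. Since every composition of $K$ into $N-2$ parts is reachable from $(K,0,\dots,0)$ by such elementary moves, all $\binom{K+N-3}{N-3}$ braids $\gamma(N,L;k_1,\dots,k_{N-2})$ are rigid conjugates of $\gamma(N,L)$, hence elements of its sliding circuit set.

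Then I would count the cycling orbits. Because $\inf(\gamma(N,L))=1$, the power of $\Delta$ is odd, so one step of cycling carries the leading factor to the tail only after twisting it by $\tau=\Delta^{-1}(\cdot)\Delta$; consequently $\mathbf{c}^{\ell}$ reproduces the sequence of factors only up to $\tau$, and a word with no internal symmetry has cycling orbit of the full size $2\ell=2(L-1)$. I would check that, apart from the lone coincidence at $(N,L)=(4,3)$, each $\gamma(N,L;k_1,\dots,k_{N-2})$ has trivial cycling symmetry, so its orbit has exactly $2(L-1)$ elements. For disjointness I would use that the $N-2$ head-tail factors are pairwise distinct, being the head-tails of ouroboroi enclosing strictly increasing numbers of punctures: reading, in cyclic order around the normal form, the number of body-pairs separating consecutive head-tails recovers the tuple $(k_1,\dots,k_{N-2})$ from any braid in the orbit, so distinct compositions give disjoint orbits. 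Summing $2(L-1)$ over the $\binom{K+N-3}{N-3}$ orbits yields the claimed bound, and since $\binom{K+N-3}{N-3}=\binom{(L-5+N)/2}{N-3}$ has degree $N-3$ in $L$, the product has degree $N-2$.

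The main obstacle will be the two normal-form verifications hidden in this geometric picture: first, that each elementary slither is literally conjugation by a simple braid that preserves rigidity and performs exactly the advertised redistribution (this is the computation the authors describe as prohibitively long by hand); and second, that no two of the listed braids coincide, i.e.\ that the orbits are full-sized and mutually disjoint. Both reduce to tracking how left-weightedness of the down-up sequences behaves under the slither and cycling moves. The nesting of the ouroboroi is what keeps these local checks uniform in $N$ and $L$, but turning the picture into a clean, case-free invariant that recovers $(k_1,\dots,k_{N-2})$ — and in particular ruling out accidental $\tau$-symmetries beyond the exceptional pair $(4,3)$ — is the genuinely delicate step.
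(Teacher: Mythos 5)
Your proposal follows essentially the same route as the paper: introduce the braids $\gamma(N,k_1,\dots,k_{N-2})$ obtained by redistributing the $K$ body-pairs among the $N-2$ gaps, show via explicit conjugations (slithering the nested ouroboroi) that all $\binom{K+N-3}{N-3}$ of them are conjugate, and multiply by the cycling-orbit size $2(L-1)$. The only slip is that the elementary slither is realized in the paper by a \emph{full twist} on the strands inside the $i$th ouroboros (e.g.\ $\sigma_3^2$ or $(\sigma_2\sigma_3\sigma_2)^{-2}$, positive or negative according to the parity of~$i$), not by a positive simple element as you state; this does not affect the count.
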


\begin{proof}
The key observation is that many other elements of the sliding circuit set of $\gamma(N,L)$ can be constructed by hand, in the following way. Let us look at the sequence of down-up sequences in Figure~\ref{F:BadSymbolic}, which illustrates the case $N=8$. We recall that the $N$th row is a down-up sequence, and that the following two down-up sequences are obtained from this one by making all even labels switch sides, twice. The resulting two Garside-factors are repeated $K$ times.

We can modify this symbolic picture:
in a similar way, we can replace rows number 3, 4, ... , $N-1$ each with three rows, where each row is obtained from the previous by making all even labels switch sides. Also, the resulting two Garside-factors can be repeated any number of times -- we will denote the number of repeats $k_{N-2}, k_{N-3},\ldots, k_2$. 

For instance, the fourth row of labels in Figure~\ref{F:BadSymbolic} can be replaced ($\rightsquigarrow$) as follows:

\begin{tikzpicture}[scale=0.4]
\draw[line width = \Bdiag] (2,2+\by) -- (2+0.7,2+\by+0.3);
\draw[line width = \Bdiag] (3,2+\by) -- (3+0.7,2+\by+0.3);
\draw (1,2) node{8};
\draw (2,2) node{7}; \draw (2,2)+(-\bx,-\by) rectangle +(\bx,\by);
\draw (3,2) node{6}; \draw (3,2)+(-\bx,-\by) rectangle +(\bx,\by);
\draw (4,2) node{5};
\draw (5,2) node{2};
\draw (6,2) node{1};
\draw (7,2) node{3};
\draw (8,2) node{4};
\draw[line width = \Bdiag] (3,2-\by) -- (3,2-\by-0.3);
\draw[line width = \Bdiag] (4,2-\by) -- (4-0.6,2-\by-0.3);

\draw (10,2) node{$\rightsquigarrow$};

\draw[line width = \Bdiag] (13,4+\by) -- (13+0.7,4+\by+0.3);
\draw[line width = \Bdiag] (14,4+\by) -- (14+0.7,4+\by+0.3);
\draw (12,4) node{8};
\draw (13,4) node{7}; \draw (13,4)+(-\bx,-\by) rectangle +(\bx,\by);
\draw (14,4) node{6};
\draw (15,4) node{5};
\draw (16,4) node{2};
\draw (17,4) node{1};
\draw (18,4) node{3};
\draw (19,4) node{4};
\draw (20,3) node[right]{Even labels switch sides};
\draw (12,2) node{7};
\draw (13,2) node{5}; 
\draw (14,2) node{4}; 
\draw (15,2) node{1};
\draw (16,2) node{2};
\draw (17,2) node{3};
\draw (18,2) node{6};
\draw (19,2) node{8};
\draw (20,1) node[right]{Even labels switch sides};
\draw (12,0) node{8};
\draw (13,0) node{7};
\draw (14,0) node{6}; \draw (14,0)+(-\bx,-\by) rectangle +(\bx,\by);
\draw (15,0) node{5};
\draw (16,0) node{2};
\draw (17,0) node{1};
\draw (18,0) node{3};
\draw (19,0) node{4};
\draw[line width = \Bdiag] (14,0-\by) -- (14,0-\by-0.3);
\draw[line width = \Bdiag] (15,0-\by) -- (15-0.6,0-\by-0.3);

\draw[decorate, decoration=coil] (30.5,-0.2) -- node[right=0.1cm]{$k_5\times$} (30.5,4);
\end{tikzpicture}

As a further modification, the variable $K$ counting the repeats of the last two Garside factors will be replaced by $k_1$. The result of this modification, in the case $N=6$, is shown in Figure~\ref{F:BadBraid}. We will denote $\gamma(N,k_1,\ldots,k_{N-2})$ the resulting braid.

\begin{claim}
If $k_1,\ldots,k_{N-2}$ and $k'_1, \ldots, k'_{N-2}$ are non-negative integers with $k_1 + \ldots + k_{N-2} = k'_1 + \ldots + k'_{N-2}$, then the braids $\gamma(N,k_1,\ldots,k_{N-2})$ and $\gamma(N,k'_1,\ldots,k'_{N-2})$ are conjugate.
\end{claim}

Before proving the claim, we observe an immediate consequence: $SC(\gamma(N,N-1+2K))$ has at least as many elements as there are ways of writing $K$ as a sum of $N-2$ non-negative integers, i.e. 
${K+N-3 \choose N-3}$ elements. Moreover, applying the cycling operation to these elements repeatedly, we cycle through the $L-1$ non-$\Delta$ factors twice before returning to the starting point. Therefore we obtain $2(L-1){K+N-3 \choose N-3}$ elements of the sliding circuit set. This completes the proof of Lemma~\ref{L:SClowerbound}, modulo the claim.

In order to prove the claim, we observe that $\gamma(N,k_1,\ldots,k_{N-2})$ has $N-2$ ouroboroi, which are nested inside each other. This is best seen in Figure~\ref{F:BadBraid}. The innermost ouroboros contains the arcs of the braid connecting labels 1 or 2 (the red arcs in the figure), and its head-tail is the last factor of the ``body-block'' (also indicated in the figure).  The next ouroboros out contains arcs connecting labels 1, 2 and 3 (red and yellow arcs in the figure), and its head-tail is indicated in the figure. Note that the head-tail of the ``red'' ouroboros is entirely contained within the ``red-yellow'' ouroboros. And so on, through to the ``red-yellow-green-blue'' ouroboros.  

Now ``slithering ouroboroi'' corresponds to modifying the coefficients $k_1,\ldots,k_{N-2}$ while keeping their sum constant. How can we modify these coefficients using conjugations? Let's look at Figure~\ref{F:BadBraid} again. 

Conjugating $\gamma(6,k_1,k_2,k_3,k_4)$ by $\sigma_3^2$ yields $\gamma(6, k_1+1, k_2-1, k_3, k_4)$. That is, conjugating by a positive full twist involving the strands inside the innermost ouroboros increases $k_1$ by~1 and decreases $k_2$ by~1. Similarly, we observe that conjugating $\gamma(6,k_1,k_2,k_3,k_4)$ by  $(\sigma_2\sigma_3\sigma_2)^{-2}$ yields $\gamma(6,k_1, k_2+1,k_3-1,k_4)$. That is, conjugating by the \emph{negative} full twist involving the strands inside the second-slimmest ouroboros increases $k_2$ by~$1$ and decreases $k_3$ by~1. More generally, conjugating $\gamma(N,k_1,\ldots,k_{N-2})$ by the full twists involving the strands inside the $i$th-innermost ouroboros either has the effect of increasing $k_i$ by~1 and decreasing $k_{i+1}$ by~1, or the opposite effect, depending on the parity of~$i$.
This completes the proof of the claim, and hence of Lemma~\ref{L:SClowerbound}.\end{proof}


\subsection{An example with eccentric ouroboroi}\label{SS:EccentricExample}

Consider the rigid pseudo-Anosov braid on five strands, with infimum~0 and supremum~$L$ (where $L$ is an odd number with $L\geqslant 3$)
$$\delta(5,L)=\sigma_1 \sigma_3 . \sigma_1 \sigma_2 \sigma_3 \sigma_2 \sigma_1 \sigma_4 . \sigma_1 \sigma_2 \sigma_4 \sigma_3 \sigma_2 \sigma_1 (. \sigma_1 \sigma_2 \sigma_3 \sigma_2 . \sigma_2 \sigma_3 \sigma_2 \sigma_1)^{\frac{L-3}{2}}$$ 
Computer calculations show that 
$$
|SC(\delta(5,L))| \ = \ 2L^3
$$
We won't prove this fact, but we will prove that the sliding circuit set has at least cubic growth as a function of $L$.
Indeed, it contains in particular the braids
\begin{align*}
\delta_{a,b,c}:= \ & \sigma_1 \sigma_2 \sigma_3 \sigma_2 \sigma_4 . \sigma_2 \sigma_4 \sigma_3 \sigma_2 \sigma_1 . (\sigma_1 \sigma_2 \sigma_3 \sigma_2 . \sigma_2 \sigma_3 \sigma_2 \sigma_1 .)^a  \sigma_1 \sigma_3 .\\
 &  (\sigma_1 \sigma_2 \sigma_3 \sigma_2 . \sigma_2 \sigma_3 \sigma_2 \sigma_1 .)^b \sigma_1 \sigma_2 \sigma_3 \sigma_2 \sigma_1 . \sigma_1 \sigma_2 \sigma_3 \sigma_2 \sigma_1 (. \sigma_1 \sigma_2 \sigma_3 \sigma_2 . \sigma_2 \sigma_3 \sigma_2 \sigma_1)^c
\end{align*}
for all triples of integers $(a,b,c)$ with $a,b,c\geqslant 0$ and $2a+2b+2c=L-5$, and all their cyclic conjugates. This yields a lower bound of
$$
|SC(\delta(5,L))|\ \geqslant \ L\cdot{\frac{L-5}{2}+2 \choose 2} \ = \ \frac{L^3-3L+2L}{8}
$$
In order to simplify notation in the following discussion, let us rewrite
$$\delta_{a,b,c} \ = \ X \ . \ S^a \ . \ Y \ . \ S^b \ . \ Z \ . \ S^c$$
Now, one can inspect $\delta_{a,b,c}$ for the presence of ouroboroi. One finds that there are three ouroboroi which are again nested.
There is an innermost, eccentric, one, involving the third and fourth strands, with its head-tail in the factor~$Z$. The next one out involves strands number 2, 3, and 4, and it is round with its head-tail in the factor~$Y$. The outermost one is eccentric again, involves strands 1, 2, 3, and 4, and has its head-tail in the factor~$X$.

We did not manage to generalise this construction, in order to obtain a family of braids with $N$~strands and sliding circuit sets of size $2L^{N-2}$.


\subsection{Absence of ouroboroi: counterexamples for short braids}\label{SS:NoOuroboroi}

We recall the optimistic intuition underlying our Commutativity Conjecture: ``Having a large sliding circuit set is a \emph{geometric} property, it comes from slithering ouroboroi''.
In this subsection we give some examples to show that the most simple-minded interpretations of this intuition cannot be true.

\begin{example}
There is one fairly obvious way of showing the limits of the intuition: if $x$ is a rigid braid, then for any positive integer~$p$ we have $|SC(x^p)|\geqslant |SC(x)|$, because the $p$th power of any element of $SC(x)$ belongs to~$SC(x^p)$. In fact, for all the examples discussed in previous sections, $|SC(x^p)|= |SC(x)|$. Now, for $p\geqslant 2$, the braid~$x^p$ has no ouroboroi. Indeed, one naturally obtains not a snake biting its own tail, but a snake biting the tails of a second snake, and so on until the $p$th snake bites the first snake's tail again.
\end{example}

\begin{example}
Here as some examples of rigid pseudo-Anosov braids with unexpected, and more worrying, behaviour. 
In each case, we have a fairly large sliding circuit set, but no, or not enough, ouroboroi. Now, in each case, there \emph{are} some features reminiscent of ouroboroi -- different parts of the braid commuting past each other, and low translation length in the curve complex. Also, all these examples occur for fairly short braids, and in each case there is no obvious way to construct from them infinite families of arbitrarily long braids with huge sliding circuit sets. Still, these examples pose definite challenges to our Commutativity Conjecture.  (For better readability, we are suppressing the letter~$\sigma$ from our notation.)
\begin{itemize}
\item for $N=5, L=1$: $3 2 1 4 3$. 
Here $|SC|=2$. The definition of an ouroboros doesn't really make sense for such a short braid. 
\item for $N=5, L=2$: $1 4 . 1 2 3 4 3 2 1$. Here $|SC|=10$ (with 5 orbits under cycling). It is also hard to see anyting like an ouroboros.
\item for $N=5, L=3$: $\Delta . 2 4 . 2 1 3 2 4 3 2 1$.  Here $|SC|=36$ (with 9 orbits under cycling). We see one ouroboros containing four strands, but not enough in order to explain the large sliding circuit set. Moreover,  in many elements of the sliding circuit set, e.g. $\Delta . 1 2 3 2 1 4 3 . 3 2 4$, we do not see any ouroboroi. 
\item for $N=5, L=4$: $1 2 1 3 2 4 3 . 3 4 3 . 3 2 1 4 3 2 1 . 1 2 1$. Here $|SC|=100$ (with 25 orbits under cycling). There are no ouroboroi. (There are several subsurfaces with roughly ouroboros-like behaviour, but nothing that would really make us expect such a large sliding circuit set.)
\item for $N=6, L=3$: $3 2 1 4 3 5 . 1 2 3 2 1 4 3 2 5 . 2 1 3 2 4 3 5 4 3$. Here $|SC|=234$ (with 78 orbits under cycling)
This is one of the very few examples of rigid pseudo-Anosov braids which we know of where the bound $|SC|\leqslant 2L^{N-2}$ does not hold -- indeed, $234>162=2L^{N-2}$; worse, there are no ouroboroi. This example is very strange.
\item similarly, for $N=7, L=2$: $2 1 3 2 1 6 5 . 1 2 1 3 2 4 5 4 3 2 6$. Here $|SC|=92$ (with 46 orbits under cycling), and there are no ouroboroi. Notice that $92>2L^{N-2}=64$. 
\end{itemize}
\end{example}


\section{Conjectures concerning the structure of sliding circuit sets}\label{S:Conjectures}


\subsection{Polynomial bounds on the sliding circuit set}

We recall that the remaining difficulty of the conjugacy problem stems essentially from the fact that we do not know a polynomial bound for the size of the sliding circuit set.
For many rigid braids, the sliding circuit set contain only the braid itself, plus the obvious conjugates (those obtained by cyclic permutation of the factors and by conjugation by Garside's~$\Delta$). However, sometimes the sliding circuit set is much larger, as seen in the examples in the previous section. As an answer to Question~\ref{Q:SCpolynBound?} we propose:

\begin{conjecture}\label{C:deltaWorst}
The family of braids~$\gamma(N,L)$ exhibited in Section~\ref{SS:NestedOuroboroi} is the worst possible: 
there exists a constant $C$ such that for any rigid braid with $N$ strands and Garside-length~$L$, the sliding circuit set has at most $C\cdot L^{N-2}$ elements. Moreover, this bound holds with $C=2$ for 
sufficiently large values of~$L$.
\end{conjecture}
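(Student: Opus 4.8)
The plan is to prove the upper bound by studying the natural graph structure on $SC(x)$ and bounding its number of cycling orbits geometrically. By Gebhardt and Gonz\'alez-Meneses, $SC(x)$ consists of the rigid conjugates of $x$ and is connected under cyclic sliding; for rigid braids cyclic sliding is just cycling, so $SC(x)$ is partitioned into cycling orbits, and distinct orbits are joined by conjugation by the minimal simple elements of the transport maps. The first observation is that each cycling orbit has size $O(L)$: the period of cycling on a rigid braid of length $L$ divides $2(L - \inf x)$, and in the extremal examples it is exactly $2(L-1)$. Hence it suffices to bound the number of cycling orbits by $\tfrac{1}{2} C\, L^{N-3}$, and to obtain $C = 2$ one must show that for large $L$ this count is asymptotic to $L^{N-3}$.

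To bound the number of orbits I would introduce geometric coordinates. Following Section~\ref{SS:NestedOuroboroi}, the idea is to attach to each rigid conjugate the relative positions of its almost-invariant subsurfaces -- the ouroboroi -- as they sit nested inside the solid torus. The essential first step is to prove a converse to the ouroboros phenomenon: a rigid braid of sufficiently large length whose $SC(x)$ is large must carry a full chain of nested ouroboroi, and moving between cycling orbits must be realised by conjugation by full twists supported on these subsurfaces, i.e.\ by ``slithering'' one ouroboros past the next. This is exactly the Commutativity Conjecture, and it is the crux of the whole argument.

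Granting such a structure theorem, the combinatorial bound follows from the topology of $D_N$. A chain of nested essential round curves in the $N$-punctured disk has length at most $N-2$ (enclosing $2,3,\dots,N-1$ punctures in turn), so there are at most $N-2$ nested ouroboroi. These determine $N-2$ positional parameters: one global position, which is absorbed into the cycling orbit of size $\sim 2L$, and $N-3$ relative positions, each of which -- because the nested ouroboroi slither essentially independently -- ranges over $O(L)$ values. Multiplying the $N-3$ relative positions gives $O(L^{N-3})$ orbits, hence $O(L^{N-2})$ elements of $SC(x)$. Tracking the exact ranges of the relative positions, and checking that the slithering conjugations commute and together account for all of $SC(x)$, should yield the leading behaviour $2(L-1)L^{N-3} \sim 2 L^{N-2}$, pinning $C = 2$ for large $L$ and matching the lower bound of Lemma~\ref{L:SClowerbound} (which, realising only a restricted simplex of slitherings, already certifies the correct degree $N-2$).

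The hard part, and the main obstacle, is the structure theorem of the second paragraph: showing that large $SC(x)$ forces the full complement of slithering ouroboroi, and that the associated conjugations commute and exhaust $SC(x)$. The counterexamples of Subsection~\ref{SS:NoOuroboroi} -- short rigid pseudo-Anosov braids with large $SC$ but with no, or too few, ouroboroi, occasionally even violating $2L^{N-2}$ -- show that no such statement can hold without a length hypothesis. One must therefore build in an effective threshold $L_0(N)$ beyond which these sporadic, non-geometric contributions cannot persist, presumably by a bounded-geodesic-image argument in the curve graph converting large subsurface projection into a genuine ouroboros. Producing that effective bound is where I expect the real difficulty, and likely new tools, to lie.
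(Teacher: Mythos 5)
The statement you are addressing is Conjecture~\ref{C:deltaWorst}; the paper offers no proof of it and presents it explicitly as open. Your proposal is an honest and well-organised account of the strategy the authors themselves gesture at (reduce to counting cycling orbits, then invoke a geometric structure theorem saying that many orbits force a full chain of nested slithering ouroboroi whose relative positions give $N-3$ coordinates, each ranging over $O(L)$ values), but it is not a proof: every quantitative step after your first paragraph is conditional on the ``structure theorem'' of your second paragraph, which is precisely the Commutativity Conjecture (Conjecture~\ref{C:BigSCisGeometric}) that the paper states as its central open problem. You acknowledge this yourself, but the consequence is that the proposal establishes nothing beyond what the paper already says in Section~\ref{S:Conjectures}, namely that a reasonable interpretation of the Commutativity Conjecture would imply Conjecture~\ref{C:deltaWorst}.

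Even granting the structure theorem, two steps would need real work. First, the assertion that the $N-3$ relative positions range over $O(L)$ values and that the slithering conjugations commute and \emph{exhaust} $SC(x)$ is exactly where the evidence is weakest: the examples of Section~\ref{SS:NoOuroboroi} (e.g.\ $N=6$, $L=3$ with $|SC|=234>2L^{N-2}$ and no ouroboroi at all) show that elements of the sliding circuit set need not be accounted for by slithering, and the paper offers no mechanism for excluding such contributions at large $L$ beyond the empirical absence of long counterexamples. Your suggested bounded-geodesic-image argument converting a large subsurface projection into a genuine ouroboros is plausible but appears nowhere in the paper and would itself be a substantial new theorem. Second, even the unconditional ingredients are not all in place: the paper does not prove that the cycling orbits in question have size exactly $2(L-1)$, and the value $|SC(\gamma(N,L))|=2(L-1)L^{N-3}$ of Theorem~\ref{T:Main} is itself asserted on the basis of computer experiment, with the formal proof omitted. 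Your text should therefore be read as a research programme consistent with the authors' own, not as a proof.
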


Note that, for braids with \emph{four} strands, we already know from work of Sang Jin Lee~\cite{SJLeeThesis} that even the \emph{super summit set} of four strand braids (which contains the sliding circuit set) has quadratically bounded size, even for non-pseudo-Anosov braids. (The statement that the sliding circuit set of four-strand braids, in the \emph{dual} Garside-structure, is quadratically bounded was also proved independently by Calvez and Wiest~\cite{CalvezWiestPA}.)


\subsection{Commutativity conjecture}

This is the main conjecture of this paper: large sliding circuit sets should come from some kind of internal commutativity of the braid, and this internal commutativity is a geometric fact. Our definition of an ouroboros is our attempt to capture this idea in a precise mathematical framework.

Now,
while the examples in Section~\ref{SS:NoOuroboroi} are worrying, we only found such bad behaviour for very short braids, despite checking many millions of randomly generated examples (using random walks in the Cayley graph with Artin generators). Obviously, this may just mean that the really bad examples get exceedingly rare as the length increases. Still, it is tempting to believe that for sufficiently long braids, the property of having very big sliding circuit sets is geometric:

\begin{conjecture}\label{C:BigSCisGeometric}
For every~$N$ (with $N\geqslant 5$), there exists an integer $L_N$ with the following property:
if~$x\in B_N$ is a braid which maximises $|SC(x)|$ among all rigid pseudo-Anosov braids in~$B_N$
of length~$L$, with $L\geqslant L_N$, then $x$ has $N-2$ ouroboroi, and $SC(x)$ is obtained from $x$ by sliding ouroboroi. 
\end{conjecture}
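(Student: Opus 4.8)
The plan is to treat Conjecture~\ref{C:BigSCisGeometric} as a research program rather than a routine verification, since it is genuinely open; what follows is the strategy I would pursue. The first and indispensable input is an \emph{upper} bound of the shape $|SC(x)|\leqslant C\cdot L^{N-2}$ for all rigid pseudo-Anosov $x\in B_N$, i.e.\ Conjecture~\ref{C:BoundOnSC}. Granting this, the example family $\gamma(N,L)$ of Section~\ref{SS:NestedOuroboroi} shows that the maximum of $|SC|$ over rigid pseudo-Anosov braids of length~$L$ is at least $2(L-1)L^{N-3}$, which is already of the top order $L^{N-2}$. Thus a maximiser~$x$ has $|SC(x)|$ pinned to within a bounded factor of the extremal value, and the task becomes a \emph{rigidity} statement: top-order growth of the sliding circuit set must force the geometric structure realised by $\gamma(N,L)$, namely $N-2$ nested ouroboroi whose slithering sweeps out $SC(x)$.

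The forward implication (ouroboroi produce large $SC$) is exactly the mechanism of Lemma~\ref{L:SClowerbound}: conjugation by full twists supported inside the successive nested ouroboroi slithers them past one another at fixed total length, so the relative positions $(k_1,\ldots,k_{N-2})$ with $\sum k_i=K$ furnish $\binom{K+N-3}{N-3}$ cycling orbits, amplified by the $2(L-1)$ elements per orbit. The substance of the conjecture is the \emph{converse}. Here the plan is to organise $SC(x)$ into cycling orbits and to read off, from the number and structure of these orbits, a collection of commuting ``directions of deformation'' of~$x$. Concretely, I would try to show that if the number of cycling orbits is of order $L^{N-3}$, then there exist $N-2$ families of conjugators, pairwise commuting up to controlled error, each moving through $SC(x)$ along a distinct axis; the goal is then to identify each such axis with the full twist about an almost-invariant subsurface, i.e.\ with an ouroboros.

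To turn commuting conjugation directions into genuine subsurfaces I would bring in the curve-complex and subsurface-projection estimates of Masur--Minsky. The Remark following the definition of ouroboros already records that an ouroboros forces the translation length of~$x$ on the curve complex of~$D_N$ to be at most~$3$; I would aim for a converse of hierarchically-hyperbolic flavour, showing that a braid with top-order $|SC|$ has small curve-graph translation length and large subsurface projections in $N-2$ distinct, mutually nested subsurfaces, and then recognising those subsurfaces as the base curves of ouroboroi via the Bernardete--Nitecki--Guti\'errez~\cite{BNG} roundness result. A natural way to control the count is induction on~$N$: excise the outermost ouroboros, pass to the braid induced on the complementary subsurface (fewer strands), and apply the inductive hypothesis to recover the remaining $N-3$ ouroboroi, with the nesting forced by the compatibility of the slithering moves.

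The main obstacle, and the reason the threshold~$L_N$ cannot be dispensed with, is precisely this converse extraction of geometry from the combinatorics of $SC$. There is at present no tool that manufactures an almost-reducing subsurface out of the mere \emph{size} of a sliding circuit set, and the sporadic short braids of Section~\ref{SS:NoOuroboroi} -- for instance the $N=6$, $L=3$ braid with $|SC|=234>162=2L^{N-2}$ and \emph{no} ouroboroi -- show that the statement is simply false without the hypothesis $L\geqslant L_N$. Hence the crux is not only to prove the structural implication for long braids but also to quantify the transition: to exhibit an explicit $L_N$ beyond which no exceptional, ouroboros-free maximiser survives. I expect this quantitative separation between the ``geometric'' large-$L$ regime and the finitely many arithmetic coincidences at small~$L$ to be the hardest and most delicate part of any proof.
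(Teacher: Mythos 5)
The statement you are asked to prove is Conjecture~\ref{C:BigSCisGeometric}, and the paper offers no proof of it: it is presented as an open (and, by the authors' own admission in the remark that follows it, not even fully precisely formulated) conjecture, whose only support in the paper is the experimental evidence of Section~\ref{S:Examples} and the explicit counterexamples for short braids in Section~\ref{SS:NoOuroboroi}. Your submission is, as you say yourself, a research program and not a proof, so there is nothing in the paper to compare it against step by step. Judged as a program, it is a faithful reading of the paper: you correctly identify the forward mechanism (the slithering argument of Lemma~\ref{L:SClowerbound}), you correctly identify that the entire difficulty lies in the converse (manufacturing $N-2$ nested almost-invariant subsurfaces from the mere size of $SC(x)$), and you correctly point to the $N=6$, $L=3$ and $N=7$, $L=2$ examples as the reason the threshold $L_N$ is unavoidable. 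None of this constitutes progress on the conjecture, and the central step of your plan --- ``identify each commuting axis of deformation with a full twist about an almost-invariant subsurface'' --- is precisely the missing idea, for which you offer no mechanism beyond invoking Masur--Minsky machinery in the hope that a converse ``of hierarchically-hyperbolic flavour'' exists.

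One structural criticism of your plan: you declare Conjecture~\ref{C:BoundOnSC} to be an ``indispensable input''. This inverts the logical architecture the paper intends. The remark after Conjecture~\ref{C:BigSCisGeometric} states that a proof of any reasonable interpretation of the geometric conjecture would \emph{imply} Conjecture~\ref{C:deltaWorst} (equivalently, the polynomial bound); the geometric statement is meant to be the route \emph{to} the bound, not conditional on it. Moreover you do not actually need the upper bound for your framing to make sense --- maximisers over the finite set of rigid pseudo-Anosov braids of length $L$ exist unconditionally, and the $\gamma(N,L)$ family already shows the maximum is at least $2(L-1)L^{N-3}$ --- so conditioning on \ref{C:BoundOnSC} only weakens the payoff of your program without buying you the rigidity step you would still have to prove. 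In short: no gap relative to the paper, because the paper proves nothing here either; but your text should not be mistaken for, or spliced in as, a proof.
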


\begin{remark}
\begin{enumerate}
\item Conjecture~\ref{C:BigSCisGeometric} is still a little bit vague -- we haven't properly defined the property of $SC$ being obtained by sliding ouroboroi. Nevertheless, a proof of any reasonable interpretation of  it would imply Conjecture~\ref{C:deltaWorst}.
\item The Commutativity Conjecture is an incarnation of the general philosophy that Garside theory is a quasi-geometric theory. Another incarnation of this principle is the conjecture of Calvez-Wiest~\cite{CalvezWiestCAL} that the additional length complex of~$B_N$, equipped with its standard Garside structure, is quasi-isometric to the curve complex of the $N$-punctured disk.
\end{enumerate}
\end{remark}


\subsection{Cubing conjecture}

We finish with some comments on the \emph{uniform} conjugacy problem. 
One step in this direction might be to find an algorithm for the conjugacy problem whose computational complexity depends polynomially not only on the length~$L$ but also on the number of strands~$N$. 
The classical Garside-theoretic approach is hopeless for this purpose, as we know that the size of the sliding circuit set \emph{can} depend exponentially on~$N$.
However, calculating the full sliding circuit set may not be necessary for solving the conjugacy problem. 

In~\cite{BirGebGM2}, the Ultra Summit Set of  a braid~$x$ (which, for rigid braids with at least two non-$\Delta$ factors coincides with the sliding circuit set~\cite{Gebhardt-GM}) is considered as a graph, with vertices corresponding to elements of $USS(x)$ and edges corresponding to minimal conjugations. 
We wish to understand the overall shape of this space -- let us call it the sliding circuit set graph. 
Unfortunately, the deep results from the paper~\cite{BirGebGM2} are not sufficient for this purpose. We point out one partial result:

\begin{proposition}[Linearly bounded diameter]\label{L:LinBoundedDiam}
For any fixed number of strands~$N$, there is a constant $C_N$ such that any two elements of~$SC(x)$ (for $x\in B_N$) are related by a sequence of at most $C_N\cdot length(x)$ elements of $SC(x)$, with each element obtained from its predecessor by conjugation by a simple element.
\end{proposition}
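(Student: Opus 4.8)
The plan is to derive the bound by combining the linear bound on conjugator length in mapping class groups with the known connectivity of the sliding circuit set under minimal conjugations. Throughout I would work in the \emph{simple-conjugation metric}: for two elements $w,w'$ of the conjugacy class of $x$, let $d(w,w')$ be the least number of conjugations by simple elements carrying $w$ to $w'$, and let $d_{SC}$ denote the analogous distance when every intermediate element is required to lie in $SC(x)$. The proposition is exactly the statement that $d_{SC}(y,y')\leqslant C_N\cdot\mathrm{length}(x)$ for all $y,y'\in SC(x)$. Since each element of $SC(x)$ is a rigid conjugate of $x$, they all share the same Garside length $L=\mathrm{length}(x)$, so their word length in the Artin generators is comparable to $L$ up to the factor $\binom{N}{2}$; and $SC(x)$ is connected in the $d_{SC}$ metric by~\cite{BirGebGM2}. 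Thus the only new content is the linear estimate on the diameter.

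First I would bound the \emph{ambient} distance $d(y,y')$ taken over the whole conjugacy class. The conjugators from $y$ to $y'$ form a single coset $Z(y)\,g$ of the centraliser, and conjugating $w=g^{-1}yg$ by a simple element $s$ replaces the conjugator $g$ by $gs$; hence $d(y,y')$ equals the distance in the Cayley graph of $B_N$ with respect to the finite symmetric generating set of simple elements, measured from the coset $Z(y)$ to the coset $Z(y)g$. As that Cayley metric is quasi-isometric to any word metric on $B_N$, and as the Masur--Minsky--Tao bound~\cite{MasurMinsky2, Tao} supplies a conjugator $g$ with $\|g\|\leqslant K_N(\|y\|+\|y'\|)\leqslant K_N' L$, we get $d(y,y')\leqslant C_N' L$. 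So within the full conjugacy class the two points are already joined by a simple-conjugation path of length $O(L)$; the entire difficulty is to keep such a path inside $SC(x)$.

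The heart of the argument is therefore to show that the inclusion of the $SC$-graph into the full simple-conjugation graph is a quasi-isometric embedding, with constants depending only on $N$. The natural tool is the cyclic sliding retraction: iterating cyclic sliding until one first reaches $SC(x)$ defines a map $r$ that fixes $SC(x)$ pointwise, and each cyclic sliding is a single simple conjugation. Given an ambient geodesic $y=w_0,w_1,\ldots,w_M=y'$ with $M=O(L)$, I would apply $r$ to each vertex; since $w_0=y$ and $w_M=y'$ already lie in $SC(x)$, we have $r(w_0)=y$ and $r(w_M)=y'$. If one can establish the key local estimate
$$
d_{SC}\bigl(r(w),r(w')\bigr)\ \leqslant\ C_N''\qquad\text{whenever } d(w,w')\leqslant 1,
$$
then concatenating gives $d_{SC}(y,y')\leqslant C_N''M=O(L)$, which is exactly the claim, and it lands on $y'$ on the nose.

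The hard part will be this local estimate, i.e.\ proving that cyclic sliding is coarsely Lipschitz for the simple-conjugation metric with a constant independent of $L$. The delicacy is that cyclic sliding is defined through the preferred prefix of the left normal form, which can jump discontinuously under a single simple conjugation, so two adjacent conjugates may need different numbers of slidings to enter $SC(x)$ and could a priori be absorbed into distant sliding circuits. I would attack this using the transport theory and the convexity of sliding circuits from~\cite{Gebhardt-GM, BirGebGM2}: a single simple conjugation changes infimum and supremum by at most one, so the two sliding trajectories remain within bounded canonical length of each other at every step, and transport along cyclic sliding should convert that bounded discrepancy into a bounded $d_{SC}$-distance between the two retracted endpoints. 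Making this transport estimate uniform in $L$ is the crux of the matter; the remaining ingredients are either direct citations or the coset-distance computation above.
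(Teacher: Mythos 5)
You correctly identify the first ingredient: the Masur--Minsky--Tao linear bound on conjugator length gives a conjugating element $g$ from $y$ to $y'$ of word length $O_N(L)$, hence of canonical length $O_N(L)$ after multiplying by a suitable central power of $\Delta^2$ to make it positive. But the second half of your argument has a genuine gap. You reduce everything to a ``key local estimate'' --- that the cyclic-sliding retraction $r$ onto $SC(x)$ is coarsely Lipschitz, with constant independent of $L$, from the ambient simple-conjugation metric to $d_{SC}$ --- and you do not prove it; you only sketch how one might attack it via transport. This estimate is exactly the hard and unestablished part: adjacent conjugates on your ambient geodesic need not lie anywhere near $SC(x)$, their sliding trajectories can have very different lengths, and nothing you cite controls where they land. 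As written, the proof is conditional on an unproved (and far from obvious) statement.

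The paper's proof avoids the retraction entirely by using the convexity of the sliding circuit set (Proposition~7 of \cite{Gebhardt-GM}) \emph{directly on the short conjugator}. Given $y,y'\in SC(x)$ and the positive conjugator $g$ of canonical length $O_N(L)$, convexity guarantees that conjugating $y$ by the successive minimal prefixes of $g$ (each a simple element, extracted via the lattice operation $g\wedge\Delta$ and iterated) never leaves $SC(x)$; the number of steps is the canonical length of $g$. So there is no need to quasi-isometrically embed the $SC$-graph in the ambient conjugation graph --- the short ambient path can be chosen to lie inside $SC(x)$ from the start. You in fact mention convexity as a possible tool for your local estimate; applying it to $g$ itself, rather than to the retraction, closes the gap and yields the stated bound immediately.
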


\begin{proof}
This is an immediate consequence of the linearly bounded conjugator length in mapping class groups \cite{MasurMinsky2,Tao} and the convexity of the sliding circuit set~\cite[Proposition 7]{Gebhardt-GM}.
\end{proof}

Also, the procedure of slithering multiple ouroboroi so as to change their relative position corresponds to conjugations by mutually commuting braids. In other words, slithering ouroboroi translate to (1-skeleta of) cubes in the sliding circuit set graph. In the examples presented in Section~\ref{S:Examples}, we have many cubes gluing together to form large cubes -- the side-length of these cubes grows with~$L$, and their dimension grows with~$N$.

Our previous observations and conjectures suggest that the overall structure of the sliding circuit set is essentially that of a big cube, or a limited number of big cubes. This idea is certainly simplistic, as we know from examples presented in~\cite{Gebhardt-GM}, or from Proposition~5.3 of \cite{CalvezWiestPA} that there may be \emph{some} branching in the sliding circuit set. Nevertheless, the following vague conjecture should have some truth to it:

\begin{conjecture}[Cubing conjecture]
If the sliding circuit set is large, then large regions of the sliding circuit set graph should be cubical. 
\end{conjecture}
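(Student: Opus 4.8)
The first task is to make the statement precise, since neither ``cubical'' nor ``large region'' has been defined. I would read ``cubical'' as saying that the sliding circuit set graph contains an isometrically embedded copy of the $1$-skeleton of a CAT(0) cube complex---equivalently, a median subgraph---and ``large region'' as saying that this subgraph contains an $(N-3)$-dimensional grid whose side-lengths grow linearly in~$L$. With this reading the plan is to build the cube complex directly out of the slithering operations, to realise it inside $SC(x)$, and then to bound its size from below.

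Second, I would grant some reasonable form of the Commutativity Conjecture~\ref{C:CommutativityConjecture} together with Conjecture~\ref{C:BigSCisGeometric}, so that a braid~$x$ maximising $|SC(x)|$ for large~$L$ may be assumed to carry $N-2$ nested ouroboroi. The consecutive slithering operations $c_1,\ldots,c_{N-3}$---realised by the full twists $\sigma_3^2$, $(\sigma_2\sigma_3\sigma_2)^{-2},\ldots$ of the proof of Lemma~\ref{L:SClowerbound}---are powers of Dehn twists about the boundary curves of the nested ouroboroi. Since nested curves can be realised disjointly, these twists commute, so iterated slithering defines a $\Z^{N-3}$-action on a neighbourhood of~$x$ in the sliding circuit set graph; recording the state by the vector of repetition counts $(k_1,\ldots,k_{N-2})$ with $\sum_i k_i=K$, conjugation by $c_i$ shifts this vector along $e_i-e_{i+1}$.

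Third, I would identify the orbit with the lattice points of an order polytope. Passing to partial sums $s_i=k_1+\cdots+k_i$ converts the constraints $k_i\geqslant 0$ into the monotone chain $0\leqslant s_1\leqslant\cdots\leqslant s_{N-3}\leqslant K$, and each generator $c_i$ simply increments the single coordinate~$s_i$. The lattice points of this polytope, with edges given by unit increments, form a distributive lattice, whose Hasse graph is a median graph and hence the $1$-skeleton of a CAT(0) cube complex; this turns the informal ``gluing of cubes into large cubes'' of Section~\ref{S:Examples} into an honest cube complex. Because each $s_i$ ranges freely over an interval of length of order $K\sim L/2$, this complex contains grids of side-length linear in~$L$, and the linear diameter bound of Proposition~\ref{L:LinBoundedDiam} would place it isometrically inside $SC(x)$.

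The difficulty is twofold. The whole argument is conditional on the still-vague Commutativity Conjecture: without it the short anomalous braids of Section~\ref{SS:NoOuroboroi}---for example the $N=6$, $L=3$ braid with $|SC|=234$ and no ouroboroi---show that a large sliding circuit set need not be cubical, which is exactly why ``$SC$ large'' must be interpreted as ``$L$ large''. More seriously, one must control the \emph{branching} of $SC(x)$: the examples of~\cite{Gebhardt-GM} and~\cite{CalvezWiestPA} show that the sliding circuit set graph is not globally a cube complex, so the genuine content is to prove that the slithering subgraph is convex (or at least isometrically embedded and of comparable cardinality) and that the non-cubical branching is confined to a region of lower-order size. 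Verifying that the slithering twists commute in the general, non-explicit case means identifying the supporting subsurfaces of the ouroboroi and proving their disjointness in the curve complex---exactly the point of contact with the hierarchically hyperbolic picture, and the step I expect to be hardest.
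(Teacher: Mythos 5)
You are being asked to compare against a proof that does not exist: the Cubing Conjecture is stated in Section~\ref{S:Conjectures} as an explicitly ``vague'' speculation, and the paper offers no proof of it (indeed no precise formulation). Your proposal is therefore a research programme rather than a proof, and to your credit you say so. But the gaps are not merely matters of polish; they are the entire content of the problem. The argument is conditional on Conjecture~\ref{C:CommutativityConjecture} and Conjecture~\ref{C:BigSCisGeometric}, which are themselves open, and granting that every braid with a large sliding circuit set carries $N-2$ nested slithering ouroboroi is very close to granting the conclusion outright. Moreover, even with that hypothesis, what your construction delivers is a cubical \emph{subgraph} of the sliding circuit set graph --- a lower bound on how much of the graph is cubical. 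The conjecture asserts that \emph{large regions} of the graph are cubical, which also requires an upper bound: one must show that the non-cubical branching (which genuinely occurs, by the examples in \cite{Gebhardt-GM} and Proposition~5.3 of \cite{CalvezWiestPA}) is confined to a lower-order portion of $SC(x)$. That is roughly equivalent to Conjecture~\ref{C:BoundOnSC} and is nowhere addressed in your plan.

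Two further specific points. The claim that ``the linear diameter bound of Proposition~\ref{L:LinBoundedDiam} would place it isometrically inside $SC(x)$'' is a non sequitur: a bound on the diameter of the ambient graph says nothing about whether a chosen subgraph is isometrically or convexly embedded; for that you would need to apply the convexity of the sliding circuit set \cite[Proposition~7]{Gebhardt-GM} to the specific slithering conjugators and rule out shortcuts through the rest of the graph. And the commutation of the slithering twists is established in the paper only for the explicit family $\gamma(N,L)$, where the ouroboroi are nested by construction; for a general braid satisfying some form of the Commutativity Conjecture, the supporting subsurfaces of distinct ouroboroi need not be disjoint or nested, and the disjointness you invoke is precisely the hard geometric input you identify but do not supply. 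In short: your outline is a reasonable sharpening of the authors' intuition, but it proves nothing unconditionally, and the unconditional statement remains open in the paper as well.
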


A very fast algorithm for solving the conjugacy problem should pick out some small preferred subset of the sliding circuit set. It could do so by holding one ouroboros in place, and slithering the other ouroboroi upwards until they are completely jammed against the fixed one, in a corner of a large cube of the sliding circuit set graph. Alternatively, one could end up in a different cubical region, or in a smaller, more interesting, region of the sliding circuit set.

%


\end{document}